\documentclass{article}     

\usepackage{graphicx}
\usepackage[english]{babel}
\usepackage[UTF8]{ctex}
\usepackage[letterpaper,top=2cm,bottom=2cm,left=3cm,right=3cm,marginparwidth=1.75cm]{geometry}
\usepackage{tikz}
\usetikzlibrary{intersections}
\usetikzlibrary{arrows.meta,decorations,calligraphy}
\usepackage{graphicx}
\usepackage[colorlinks=true, allcolors=blue]{hyperref}
\usepackage{amsfonts}
\usepackage{amstext}
\usepackage{float}
\usepackage{amssymb,amsmath,amsbsy,amsthm}
\usepackage{verbatim}
\usepackage{thm-restate}
%

\newcommand{\bt}{\begin{tikzpicture}}
\newcommand{\et}{\end{tikzpicture}}
\providecommand{\keywords}[1]
{
  \small	
  \textbf{\textit{Keywords---}} #1
}
\newtheorem{theorem}{Theorem}
\newtheorem{lemma}{Lemma}
\newtheorem{proposition}{Proposition}

\newtheorem{cj}{Conjecture}

\tikzset{
	arr/.style={-stealth,shorten >=4.2mm,shorten <=4.2mm,thick}, 
	dot2/.style={rotate=90,font=\LARGE}, 
	dot/.style={font=\LARGE} 
}


%
\begin{document}

\title{Lollipop and Cubic Weight Functions for Graph Pebbling}

\author{Marshall Yang\thanks{Brooklyn, New York}      \qquad
         Carl Yerger \thanks{Davidson College, Davidson, NC 28035
Email: {\tt cayerger@davidson.edu}} \qquad
			Runtian Zhou \thanks{
Duke University, Durham, NC 27708
Email: {\tt rz169@duke.edu}}
}

\date{\today}

\maketitle

\begin{abstract}
Given a configuration of pebbles on the vertices of a graph $G$, a pebbling move removes two
pebbles from a vertex and puts one pebble on an adjacent vertex. The pebbling number of a graph $G$ is the smallest number of pebbles required such that, given an arbitrary initial configuration of pebbles, one pebble can be moved to any vertex of $G$ through some sequence of pebbling moves. Through constructing a non-tree weight function for $Q_4$, we improve the weight function technique, introduced by Hurlbert and extended by Cranston et al., that gives an upper bound for the pebbling number of graphs. Then, we propose a conjecture on weight functions for the $n$-dimensional cube. We also construct a set of valid weight functions for variations of lollipop graphs, extending previously known constructions.

\keywords{Pebbling \and Cube \and Lollipop Graph \and Weight Function}

\end{abstract}

\section{Introduction}
\label{intro}

Graph pebbling is a combinatorial game played on an undirected graph with an initial
configuration of pebbles. The game is composed of a sequence of pebbling moves, where each pebbling move removes two pebbles from one vertex and places one pebble on an adjacent vertex. The graph pebbling model was first introduced by Chung \cite{Chung}. The graph pebbling problem originated as a proof technique to prove a zero-sum theorem, introduced by Erd\H{o}s et al. \cite{ZivTheoremIT}. The details of the proof can be found in \cite{Chung}. Pebbling has been extensively studied in the last $30$ years and there have been many interesting results \footnote{\href{https://www.people.vcu.edu/~ghurlbert/pebbling/pebb.html}{https://www.people.vcu.edu/$\sim$ghurlbert/pebbling/pebb.html}}.

Calculating pebbling numbers is difficult: Clark and Milans \cite{milans_complexity_2006} proved that determining if the pebbling number is at most $k$ is $\Pi_{2}^{p}$-complete. Hurlbert \cite{hurlbert2017} introduced a linear programming technique based on weight functions on trees, in the hope of more efficiently computing bounds on pebbling numbers. Weight functions on trees might not always be able to give tight bounds for the pebbling numbers of particular graphs. For instance, Chung proved in \cite{Chung} that $\pi(Q_3)=8$, where $Q_3$ denotes the $3$-cube, but Hurlbert \cite{hurlbert2017} claimed that the best upper bound of $\pi(Q_3)$ that could be obtained using only weight functions on trees is $9$. Many researchers have attempted to generalize the known set of such functions, and each advancement brings with it a larger class of graphs on which they apply. In 2015, Cranston et al. \cite{cranston2015modified} proved $\pi(Q_3) = 8$ by presenting a set of non-tree weight functions on $Q_3$. In this paper, we construct a set of new non-tree weight functions and use them to prove $\pi(Q_4) = 16$. While it may seem a small improvement from $Q_3$ to $Q_4$, but with it we unveil a conjecture that would lead to a proof for all cubes. 

The Cartesian product of two graphs $G$ and $H$, denoted $G \mathbin{\square} H$, is a graph with vertices indexed by pairs of vertices (one from $G$ and one from $H$) with edges corresponding to an edge in $G$ or an edge in $H$.  Much of the theoretical research in pebbling is strongly motivated by a famous open question of Graham, concerning the pebbling number of the Cartesian product of graphs.

\begin{cj}[Graham {\cite{Chung}}] Given connected graphs $G$ and $H$, the Cartesian product graph $G \mathbin{\square} H$ satisfies
\label{conjBox}
 \[\pi(G \mathbin{\square} H)\leq \pi(G)\pi(H).\]
\end{cj}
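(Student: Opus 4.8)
The plan is to attack the conjecture through the fiber-decomposition strategy that underlies all known partial progress (Chung's proof for products of paths, which already yields the hypercube case $\pi(Q_n)=2^n$, together with its later extensions). Writing $G \mathbin{\square} H$ as $|V(H)|$ disjoint copies of $G$, the \emph{$G$-fibers}, indexed by the vertices of $H$, I would fix a target vertex $(g_0,h_0)$ and an arbitrary distribution $D$ of at least $\pi(G)\pi(H)$ pebbles. The goal is to reduce the two-dimensional problem to a one-dimensional pebbling problem on $H$ carried out entirely within the single column over $g_0$.

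The first step is to use only moves internal to each $G$-fiber $h$ to push as many pebbles as possible onto the vertex $(g_0,h)$; let $D'(h)$ denote the number accumulated there. This yields an auxiliary distribution $D'$ on $V(H)$, and if $|D'| \ge \pi(H)$ then solving the pebbling problem for $D'$ with target $h_0$ in the column over $g_0$ finishes the argument, since the moves available in that column are exactly $H$-moves. The second step is to bound $|D'|$ from below: if a fiber holds $p_h$ pebbles I would like $D'(h) \ge \lfloor p_h/\pi(G)\rfloor$, so that $|D'| \ge \sum_h \lfloor p_h/\pi(G)\rfloor$, which together with $\sum_h p_h \ge \pi(G)\pi(H)$ should force $|D'| \ge \pi(H)$. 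The mechanism for extracting several pebbles to a single target is the \emph{two-pebbling property} of $G$: a distribution noticeably larger than $\pi(G)$ can deliver two, and by iteration more, pebbles to any target, up to a loss governed by parity and the number of occupied vertices. Carefully tracking these parity losses across all fibers, and recycling the leftover pebbles in under-filled fibers through a second pass, is the bookkeeping I would grind through here.

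The hard part --- and the reason the conjecture remains open in general --- is that this scheme presupposes that the factors satisfy the two-pebbling property, which is known to fail for some graphs, so the summation bound on $|D'|$ can collapse; moreover the parity and leftover accounting degrades exactly when $\pi(G)$ and $\pi(H)$ are large relative to the diameters, where the cheap slack disappears. An alternative I would explore is the weight-function route developed in this paper: if one could manufacture, from weight functions certifying $\pi(G)$ and $\pi(H)$, a single valid weight function on $G \mathbin{\square} H$ whose total weight equals $\pi(G)\pi(H)$ --- essentially a product construction on the certificates --- then the linear-programming bound would give the conjecture directly. The paper's own observation that tree weight functions already fail to be tight on $Q_3$ warns that naive products will not be valid weight functions, so the genuine obstacle on this route is to discover a combination operation on weight functions that respects the Cartesian product while preserving validity.
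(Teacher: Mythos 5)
This statement is Conjecture~\ref{conjBox}, Graham's conjecture: it is an open problem, the paper offers no proof of it (it is stated only as motivation), and your proposal is not a proof either --- you say as much yourself in the final paragraph. So the honest verdict is that there is a genuine gap, indeed the gap that has kept the conjecture open for decades. Concretely, the arithmetic in your second step already fails: from $\sum_h p_h \ge \pi(G)\pi(H)$ and the per-fiber bound $D'(h)\ge\lfloor p_h/\pi(G)\rfloor$ you cannot conclude $|D'|\ge\pi(H)$. Each fiber can waste up to $\pi(G)-1$ pebbles to the floor, and since $H$ has $|V(H)|\le\pi(H)$ fibers the guaranteed total is only about
\[
\sum_h \left\lfloor \frac{p_h}{\pi(G)}\right\rfloor \;\ge\; \frac{\pi(G)\pi(H)-|V(H)|\bigl(\pi(G)-1\bigr)}{\pi(G)} \;\ge\; \frac{\pi(H)}{\pi(G)},
\]
which falls short of $\pi(H)$ by roughly a factor of $\pi(G)$. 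This is precisely why the known partial results invoke the $2$-pebbling property, whose bound $2\pi(G)-q+1$ (with $q$ the number of occupied vertices) is much stronger than two independent applications of $\pi(G)$; and that property is exactly what Lemke graphs lack, so the scheme cannot be completed in general. Your alternative suggestion --- a product construction on valid weight functions --- is likewise only a research direction: the paper itself notes that tree weight functions already fail to certify $\pi(Q_3)=8=\pi(Q_2)\pi(K_2)\cdot\frac{8}{8}$ tightly, and no combination operation preserving validity across Cartesian products is known. The paper's actual contributions (the non-tree weight functions for $Q_4$ and for lollipop variants, and Conjecture~\ref{c1}) are best read as incremental steps toward such machinery, not as a proof of Conjecture~\ref{conjBox}.
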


Graham's conjecture has been resolved for specific families of graphs including products of paths \cite{Chung}, products of cycles \cite{herscovici2003graham,herscovici2008graham,Herscovici,sne}, products of trees \cite{sne}, and products of fan and wheel graphs \cite{FengKim2}.  It was also proved for specific products in which one of the graphs has the so-called \textbf{2-pebbling property} \cite{Chung,sne,wang2009graham}, namely that the number of pebbles required to place two pebbles on an arbitrary vertex is $2 \cdot \pi(G) - q + 1$, where $q$ denotes the number of vertices initially containing at least one pebble. A graph withouth the 2-pebbling property is called a \textbf{Lemke graph} \cite{lemke}. One of the major hurdles in tackling Graham's conjecture is the lack of tractable computational tools. Numerically verifying Graham's conjecture for specific graphs has been extremely difficult; as a result, there does not appear to be a discussion, let alone a consensus, regarding whether or not the conjecture is true.

It has been an interest in the community to find a proof for Chung's result \cite{Chung} that does not require the $2$-pebbling property, since there is some debate about its necessity in most of the results that verify Graham's Conjecture \cite{kenter}. Here we see the first evidence that a weight function for a graph $G$ can be extended to the graph $G'$, obtained by adding a pendant edge to some vertex $r$. This is equivalent, of course, to finding a weight function for $G$ that corresponds to the $2$-fold pebbling number of $r$; it is not known, in general, how to achieve this. Such an extension is the first step towards being able to apply weight functions to the Cartesian product of $G$ by $K_2$.

Additionally, we construct a set of non-tree weight functions on generalizations of lollipop graphs. The simplest of lollipops appear as subgraphs of many different graphs, and thus have been used successfully in crafting upper bounds in several instances~\cite{cranston2015modified}. Obtaining weight functions here for their generalizations should prove fruitful in the future.

Recently, Flocco et al. \cite{flocco} used mixed-integer linear programming (MILP) to offer certificates of pebbling bounds based on weight functions of an ideal set of subtrees in a graph. Flocco's work employs computational techniques to obtain weight functions using a new open-source computational toolkit.\footnote{\href{https://github.com/dominicflocco/Graph_Pebbling}{https://github.com/dominicflocco/Graph$\_$Pebbling}} The authors of \cite{flocco} and \cite{hurlbert2017} only worked on weight functions of trees. With additional computer programming, we anticipate that our non-tree weight functions will strengthen computational results that only employ weight functions of trees. Specifically, each non-tree weight function acts as an additional set of stronger constraints that can now be applied to any graph that has this structure as a subgraph when computing bounds for its pebbling number. Therefore, using our non-tree weight functions, the techniques such as MILP may be used to provide certificates of better upper bounds for pebbling numbers. As seen in \cite{cranston2015modified}, incorporating more elaborate weight functions can significantly improve known pebbling bounds, and we see the potential for these improvements as a major consequence of our work.

\subsection{Notation and Terminology}

We denote the vertex and the edge sets of a graph $G$ by $V(G)$ and $E(G)$ respectively. The \textbf{distance} between two distinct vertices $u,v\in V(G)$, denoted by $d(u,v)$, is the length of the shortest path between $u$ and $v$. The \textbf{diameter} of $G$ is defined as $\max_{u,v\in V(G)}d(u,v)$. A (pebble) \textbf{configuration} $p$ on $G$ is a function $p: V(G) \rightarrow \mathbb N \cup \{0\}$, where $p(v)$ is the number of pebbles on $v$ for each $v\in V(G)$. A \textbf{weight function} on a graph $G$ is a function $w: V(G)\rightarrow \mathbb{R}^+\cup\{0\}$. A \textbf{pebbling move} from vertex $u$ to an adjacent vertex $v$ removes $2$ pebbles on $u$ and places $1$ pebble on $v$. Let $1_G$ be the configuration on $G$ where each vertex has exactly $1$ pebble.

Let $G'$ be a subgraph of $G$ and $p$ be a configuration on $G$. Let $w$ be a weight function on $G$. Denote by $w_{G'}$ the weight function on $G'$ formed by the restriction of $w$ on $V(G')$. The size of $p$, denoted by $|p|$, the number of pebbles on $G'$, denoted by $p(G')$, and the weight of $p$, denoted by $w(p)$, are defined as follows:
\begin{alignat*}{3}
|p|&=\sum_{v\in V(G)}p(v)&\qquad p(G')&=\sum_{v\in V(G')}p(v)&\qquad w(p)&=\sum_{v\in V(G)}p(v)\cdot w(v).
\end{alignat*}
If $G$ is ``rooted'' at some vertex $r\in V(G)$ and there exists a possibly empty sequence of pebbling moves started from $p$ that could put $1$ pebble on $r$, we say $p$ is \textbf{$r$-solvable}. If no such sequence exists, we say $p$ is $r$-unsolvable. A weight function $w$ for a graph $G$ rooted at $r$ is \textbf{valid} if and only if $r$ is the only vertex with weight $0$ and every $r$-unsolvable configuration $p$ satisfies $w(p)\leq w(1_G)$. Note that our definition of valid weight functions is slightly different from the definition of strategy in~\cite{hurlbert2017}, as we restrict that every vertex other than $r$ has to have \textit{positive} weight. In this paper, we only discuss weight functions with $w(r)=0$, so we will not draw the weight of $r$ in figures. The \textbf{pebbling number of a graph $G$ rooted at $r$}, denoted by $\pi(G,r)$, is the smallest integer such that any configuration with at least $\pi(G,r)$ pebbles could reach $r$ through some sequence of pebbling moves. The \textbf{pebbling number} of $G$, denoted by $\pi(G)$, is defined as $$\pi(G)=\max_{r\in V(G)}\pi(G,r).$$

The \textbf{hypercube} of dimension $n$, denoted by $Q_n$, is defined by $$V(Q_n)=\{(x_1,x_2,\ldots,x_n)|x_1,x_2,\ldots,x_n\in \{0,1\}\},$$ where two vertices are adjacent if and only if they differ in exactly one coordinate.

\subsection{Background}
In this subsection we introduce some helpful lemmas that will be used in later proofs.
\begin{lemma}\label{l1}
Let $w$ be a valid weight function on a graph $G$ rooted at $r$. Let \\$m=\min_{v\in V(G)-\{r\}}w(v)$. Then, $$\pi(G,r)\leq
\left \lfloor \frac{w(1_G)}{m} \right \rfloor  +1.$$
\end{lemma}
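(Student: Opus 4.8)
The plan is to translate the bound on $\pi(G,r)$ into a bound on the size of unsolvable configurations. Since $\pi(G,r)$ is by definition the smallest integer guaranteeing $r$-solvability, it suffices to show that every $r$-unsolvable configuration $p$ satisfies $|p| \le \left\lfloor \frac{w(1_G)}{m} \right\rfloor$; then any configuration with at least $\left\lfloor \frac{w(1_G)}{m} \right\rfloor + 1$ pebbles cannot be $r$-unsolvable, which is exactly the desired inequality.

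First I would record the key (and essentially only) observation: an $r$-unsolvable configuration $p$ must have $p(r)=0$. Indeed, if $p(r)\ge 1$, then the empty sequence of pebbling moves already leaves a pebble on $r$, so $p$ would be $r$-solvable. Hence for every unsolvable $p$, all pebbles sit on vertices other than $r$.

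Next I would lower-bound the weight of $p$ by its size. By definition of $m$ we have $w(v)\ge m$ for every $v\in V(G)-\{r\}$, and since $p(r)=0$ the root contributes nothing to either sum, so $w(p)=\sum_{v\ne r}p(v)\,w(v)\ge m\sum_{v\ne r}p(v)=m\,|p|$. Finally I would invoke validity: because $p$ is $r$-unsolvable, the defining property of a valid weight function gives $w(p)\le w(1_G)$. Chaining these yields $m\,|p|\le w(1_G)$, hence $|p|\le w(1_G)/m$, and since $|p|$ is an integer, $|p|\le\left\lfloor\frac{w(1_G)}{m}\right\rfloor$, completing the argument.

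The only subtlety, and thus the one step worth flagging, is the observation that $p(r)=0$: without it, pebbles resting on $r$ would count toward $|p|$ while contributing zero weight, and the inequality $w(p)\ge m\,|p|$ would break down. Everything else is a routine chaining of the two inequalities $w(p)\ge m\,|p|$ and $w(p)\le w(1_G)$ followed by taking the floor, so I do not expect any genuine obstacle here.
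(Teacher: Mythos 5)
Your proposal is correct and is essentially the paper's own argument viewed in the contrapositive: both rest on the chain $m\,|p|\leq w(p)\leq w(1_G)$ for an $r$-unsolvable $p$ (equivalently, $w(p)>w(1_G)$ forces solvability), followed by taking the floor. Your explicit remark that $p(r)=0$ for unsolvable configurations is a detail the paper handles by simply restricting to configurations with $p(r)=0$, so there is no substantive difference.
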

\begin{proof}
Given any configuration $p$ with $|p|= \left \lfloor \frac{w(1_G)}{m} \right \rfloor  +1$ and $p(r)=0$, we have $$w(p)\geq m\cdot \left (\left \lfloor \frac{w(1_G)}{m} \right \rfloor  +1 \right )>w(1_G).$$ Thus by validity of $w$, configuration $p$ is $r$-solvable. Because $p$ is arbitrary, $$\pi(G,r)\leq |p|=\left \lfloor \frac{w(1_G)}{m} \right \rfloor  +1.$$
\qed
\end{proof}

Note that Lemma~\ref{l1} generalizes Corollary 3 in \cite{hurlbert2017} and Lemma 2 in \cite{cranston2015modified}.

\begin{lemma}\label{l2}
For any $k\geq 1$, let $P_{k+1}$ be the path $(v_0,v_1,v_2,\ldots, v_{k-1},r)$ rooted at $r$. Let $w$ be the weight function on $P_{k+1}$ such that $w(r)=0$ and $w(v_i)=2^i$ for each $i$ from $0$ to $k-1$. Let $p$ be an arbitrary configuration on $P_{k+1}$. Then, $p$ is $r$-solvable if and only if $w(p)\geq 2^k$. More generally, the pebbling number of graphs with diameter $k$ must be at least $2^k$.
\end{lemma}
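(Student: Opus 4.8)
The plan is to pass from the weight $w$ to the rescaled \emph{distance potential} $\Phi(p)=\sum_{v\in V(P_{k+1})}p(v)\,2^{-d(v,r)}$, which assigns $r$ the factor $2^{0}=1$ and each $v_i$ the factor $2^{-(k-i)}$. Since $w(v_i)=2^{i}=2^{k}\cdot 2^{-(k-i)}=2^{k}\,2^{-d(v_i,r)}$ while $w(r)=0$, every configuration with $p(r)=0$ satisfies $\Phi(p)=2^{-k}w(p)$, so the target inequality $w(p)\ge 2^{k}$ is exactly $\Phi(p)\ge 1$. I would first note that it suffices to treat configurations with $p(r)=0$: the weight $w$ ignores pebbles already sitting on $r$, and an $r$-unsolvable configuration necessarily has no pebble on $r$, so this is the only case relevant to later use. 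The whole statement then reduces to showing that $p$ is $r$-solvable if and only if $\Phi(p)\ge 1$.

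For the ``only if'' direction I would show that $\Phi$ is non-increasing under every pebbling move, on any graph. A move from $x$ to an adjacent $y$ changes $\Phi$ by $-2\cdot 2^{-d(x,r)}+2^{-d(y,r)}$; adjacency forces $d(y,r)\ge d(x,r)-1$, hence $2^{-d(y,r)}\le 2\cdot 2^{-d(x,r)}$ and the change is at most $0$ (and exactly $0$ for a step toward $r$). If $p$ is $r$-solvable it reaches some $q$ with $q(r)\ge 1$, so $\Phi(q)\ge q(r)\ge 1$, and monotonicity gives $\Phi(p)\ge\Phi(q)\ge 1$, i.e.\ $w(p)\ge 2^{k}$.

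The ``if'' direction is the crux, and the obstacle is integrality: collapsing pebbles toward $r$ discards a remainder at each vertex, so it is not immediate that $\Phi(p)\ge 1$ leaves a whole pebble for $r$. I would run the greedy collapse from the far end: set $b_0=p(v_0)$ and $b_i=p(v_i)+\lfloor b_{i-1}/2\rfloor$, moving $\lfloor b_{i-1}/2\rfloor$ pebbles from $v_{i-1}$ onto $v_i$ in turn, so that $N:=\lfloor b_{k-1}/2\rfloor$ pebbles land on $r$ while each $v_i$ keeps its remainder $\epsilon_i:=b_i\bmod 2\in\{0,1\}$. Every greedy move is toward $r$, hence $\Phi$-preserving, so the final configuration $q$ (with $q(r)=N$ and $q(v_i)=\epsilon_i$) satisfies
\[
\Phi(p)=\Phi(q)=N+\sum_{i=0}^{k-1}\epsilon_i\,2^{-(k-i)}.
\]
The error term lies in $[0,\,1-2^{-k}]\subset[0,1)$, so $N=\lfloor\Phi(p)\rfloor$. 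Thus $\Phi(p)\ge 1$ forces $N\ge 1$ and exhibits an explicit solving sequence. This identity is exactly what tames the floors, since the total waste is always strictly less than one pebble.

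Finally, for the general diameter claim I would reuse the same monotone potential $\Phi(p)=\sum_{v}p(v)\,2^{-d(v,r)}$. If $\mathrm{diam}(G)=k$, pick $u,r$ with $d(u,r)=k$ and place $2^{k}-1$ pebbles on $u$. Then $\Phi(p)=(2^{k}-1)2^{-k}=1-2^{-k}<1$, and since $\Phi$ cannot increase under pebbling moves on any graph, no sequence can reach a configuration carrying a pebble on $r$ (which would require $\Phi\ge 1$). Hence this configuration is $r$-unsolvable, giving $\pi(G,r)\ge 2^{k}$ and therefore $\pi(G)\ge 2^{k}$.
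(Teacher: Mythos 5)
The paper does not actually prove Lemma~\ref{l2}: it states the result and defers to Section~2 of Hurlbert's survey and to Theorem~4.2 of Czygrinow et al., so there is no in-paper argument to compare against. Your proof is correct and self-contained. The rescaled potential $\Phi(p)=\sum_v p(v)\,2^{-d(v,r)}$ is the standard device for this result, and you deploy it correctly in both directions: monotonicity of $\Phi$ under arbitrary pebbling moves (valid on any graph, since adjacency gives $d(y,r)\ge d(x,r)-1$) yields the ``only if'' direction and the diameter lower bound via the configuration of $2^k-1$ pebbles at distance $k$, while the greedy collapse with the exact identity $\Phi(p)=N+\sum_i\epsilon_i 2^{-(k-i)}$ and the bound $\sum_{i=0}^{k-1}2^{-(k-i)}=1-2^{-k}<1$ cleanly resolves the integrality worry in the ``if'' direction. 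You were also right to flag the degenerate case $p(r)\ge 1$: as literally stated the ``only if'' direction fails there (one pebble sitting on $r$ is solvable with $w(p)=0$), and restricting to $p(r)=0$ is both the intended reading and the only case the paper ever invokes. The identity $N=\lfloor\Phi(p)\rfloor$ is slightly stronger than the needed $N\ge 1$, but it is a worthwhile bonus since it makes the tightness of the threshold $2^k$ transparent.
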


Lemma~\ref{l2} was mentioned in Section 2 of \cite{hurlbert2017} but was first observed in the proof of Theorem 4.2 in \cite{hurlbert2002} by Czygrinow et al.. It gives an idea on how weight function techniques can be used to calculate pebbling numbers. The next lemma is a generalized form of Lemma \ref{l2}.

\begin{lemma}[Weight Function Lemma, \cite{hurlbert2017}]\label{l3}
Let $w$ be a weight function on a tree $T$ rooted at $r$. For each $v\in V(T)-\{r\}$, let $v^+$ be the parent of $v$, the neighbor of $v$ that is closer to $r$. If $w(r)=0$ and $w(v^+)\geq 2w(v)$ for every $v$ not adjacent to $r$, then $w$ is valid.
\end{lemma}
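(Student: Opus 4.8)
The plan is to prove the Weight Function Lemma by showing directly that any $r$-unsolvable configuration $p$ satisfies $w(p) \le w(1_T)$; by the definition of validity (and since $w(r)=0$ with all other weights positive is given as a hypothesis on $w$), this is exactly what needs checking. I would induct on the structure of the tree rooted at $r$, processing vertices from the leaves inward. The key quantitative fact driving everything is Lemma~\ref{l2}: on a path with geometric weights $2^i$, solvability is governed precisely by whether the total weight reaches the threshold $2^k$. The condition $w(v^+) \ge 2w(v)$ for every $v$ not adjacent to $r$ is the tree-analogue of this doubling, so the intuition is that weight can only be ``pushed toward $r$'' at a loss of at least a factor of $2$ per edge, meaning the weight function is a valid certificate of unsolvability.

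First I would set up the contrapositive carefully. Suppose $p$ is a configuration with $w(p) > w(1_T)$; I want to show $p$ is $r$-solvable. Equivalently, I would argue that for an $r$-unsolvable $p$, the weight is bounded by $w(1_T) = \sum_{v \ne r} w(v)$. The natural approach is to track, for each vertex $v$, how the pebbles on $v$ contribute weight, and to use the fact that an unsolvable configuration cannot have ``too many'' pebbles concentrated far from $r$. Concretely, I would consider the subtrees hanging off $r$: since $T$ is a tree, the neighbors of $r$ partition $T - \{r\}$ into disjoint rooted subtrees, and a configuration is $r$-solvable if and only if the pebbles can push one pebble to some neighbor $u$ of $r$ in a way that $u$ accumulates at least $2$ pebbles. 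So I would reduce to analyzing each branch independently and then combining.

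The cleanest route is likely an inductive argument on the number of vertices (or the height) of $T$. At a leaf $v$ with parent $v^+$, the doubling condition lets me ``collapse'' $v$ into $v^+$: pebbles on $v$ can be moved to $v^+$ at half rate, and because $w(v^+) \ge 2w(v)$, this collapse does not increase the relevant weight slack. I would formalize a reduction step that removes a leaf while preserving both unsolvability and the weight inequality, so that the inductive hypothesis applies to the smaller tree. The doubling hypothesis is precisely what makes each such collapse weight-nonincreasing, and the base case is a single edge handled by Lemma~\ref{l2}.

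The main obstacle I anticipate is handling the boundary condition correctly: the hypothesis $w(v^+) \ge 2w(v)$ is required only for $v$ \emph{not} adjacent to $r$, so the edges incident to $r$ are exempt from the doubling constraint. This asymmetry is where the $w(1_T)$ term (rather than a pure power-of-two bound) enters, and it is the delicate part of bookkeeping: I must ensure that when a branch is collapsed all the way down to a neighbor $u$ of $r$, the accumulated weight is compared against $w(1_T)$ in a way that accounts for the ``free'' last edge into $r$. I would need to verify that an unsolvable configuration leaves at most one pebble's worth of deficit at each vertex along the path to $r$, and that summing these deficits over all vertices of $T$ yields exactly $w(1_T)$. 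Getting this accounting tight — rather than off by a constant factor — is the crux, and I expect it to require the careful geometric-series estimate already exhibited in Lemma~\ref{l2} applied branch-by-branch.
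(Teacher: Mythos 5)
The paper does not actually prove this lemma: it is quoted from Hurlbert \cite{hurlbert2017} and used as a black box (the surrounding text only credits Hurlbert with devising these weight functions), so there is no in-paper proof to compare yours against. Judged on its own merits, your plan is sound and is essentially the standard argument for the Weight Function Lemma. The crux you identify---collapsing a leaf $v$ whose parent $u=v^{+}$ is not $r$---does work, and the accounting is automatically tight rather than off by a constant: moving $\left\lfloor p(v)/2\right\rfloor$ pebbles from $v$ to $u$ and discarding the possible leftover pebble yields a configuration $p'$ on $T-v$ that is still $r$-unsolvable (it is obtained from $p$ by legal moves and pebble deletions), and
\[
w(p') \;\ge\; w(p) - p(v)\,w(v) + \left\lfloor p(v)/2\right\rfloor\cdot 2w(v) \;\ge\; w(p) - w(v),
\]
using $w(u)\ge 2w(v)$; since $w(1_{T-v}) = w(1_T) - w(v)$, the inductive bound $w(p')\le w(1_{T-v})$ gives exactly $w(p)\le w(1_T)$. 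The boundary issue you flag resolves itself more easily than you fear: if a leaf $v$ has parent $r$, unsolvability forces $p(v)\le 1$, so deleting $v$ removes at most $w(v)$ from both sides of the target inequality; consequently the base case is a star centered at $r$ (or the single vertex $r$), and Lemma~\ref{l2} is not really needed. The branch decomposition you sketch early on is likewise unnecessary once the leaf collapse is in place. Finally, the same idea admits a cleaner packaging that avoids induction entirely: starting from an unsolvable $p$, greedily move pebbles toward $r$ from any vertex not adjacent to $r$ holding at least two pebbles; each such move does not decrease $w$ by the doubling hypothesis, the process terminates since each move destroys a pebble, and the terminal configuration has at most one pebble per vertex (a vertex adjacent to $r$ with two pebbles would make $p$ solvable), whence $w(p)\le w(q)\le w(1_T)$.
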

Lemma \ref{l3} presents the first set of valid weight functions described in the pebbling literature. Hurlbert \cite{hurlbert2017} devised these weight functions and Cranston et al. \cite{cranston2015modified} made improvements upon them.
\begin{lemma}[Conic Lemma]\label{l4}
For a graph $G$ rooted at $r$, let weight function $w$ on $G$ be a conic combination of valid weight functions on connected subgraphs of $G$ rooted at $r$, such that $w(v)>0$ for each $v\neq r$. Then $w$ is valid.
\end{lemma}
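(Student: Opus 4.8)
The plan is to verify the two clauses in the definition of validity separately. The first clause—that $r$ is the only vertex of weight $0$—is essentially handed to us. Each constituent weight function $w_i$ is defined on a connected subgraph $G_i$ rooted at $r$ and is valid there, so $w_i(r)=0$; hence $w(r)=\sum_i c_i\,w_i(r)=0$, while the hypothesis $w(v)>0$ for each $v\neq r$ supplies the rest. The real content therefore lies in the second clause: showing $w(p)\leq w(1_G)$ for every $r$-unsolvable configuration $p$ on $G$.

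As setup, I would extend each $w_i$ from $V(G_i)$ to all of $V(G)$ by declaring $w_i(v)=0$ for $v\notin V(G_i)$. With this convention $w=\sum_i c_i\,w_i$ holds as an identity of functions on $V(G)$, and moreover $w_i(p)=\sum_{v\in V(G_i)}p(v)\,w_i(v)$ depends only on the pebbles $p$ places on $G_i$. Now fix an $r$-unsolvable configuration $p$ on $G$, and for each $i$ let $p_i$ be the configuration on $G_i$ defined by $p_i(v)=p(v)$ for all $v\in V(G_i)$.

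The heart of the argument is the claim that each $p_i$ is $r$-unsolvable on $G_i$. I would establish this by contraposition: if some $p_i$ were $r$-solvable on $G_i$, then because every edge of $G_i$ is an edge of $G$, the solving sequence of pebbling moves in $G_i$ is also a legal sequence of moves in $G$, using only the pebbles that $p$ places on $V(G_i)$. That sequence would put a pebble on $r$ and hence make $p$ itself $r$-solvable, contradicting the choice of $p$. So every $p_i$ is $r$-unsolvable, and the validity of $w_i$ on $G_i$ yields $w_i(p)=w_i(p_i)\leq w_i(1_{G_i})=w_i(1_G)$, where the outer equalities are exactly the zero-extension bookkeeping noted above.

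Finally I would assemble these inequalities by linearity: since each $c_i\geq 0$,
\[
w(p)=\sum_i c_i\,w_i(p)\;\leq\;\sum_i c_i\,w_i(1_G)=w(1_G),
\]
which is precisely the second clause. The only step requiring genuine care is the unsolvability-of-restrictions claim, and I expect it to be the main (though not deep) obstacle, since it is the one place where the subgraph hypothesis is actually used—one must confirm that the moves witnessing solvability in $G_i$ never secretly rely on edges or pebbles outside $G_i$, which holds precisely because $G_i$ is a subgraph of $G$ sharing the same root $r$.
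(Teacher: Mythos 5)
Your proposal is correct and follows essentially the same route as the paper: extend each $w_i$ by zero outside $V(G_i)$, observe that an $r$-unsolvable configuration on $G$ restricts to an $r$-unsolvable configuration on each $G_i$, and sum the resulting inequalities with the nonnegative coefficients. You are in fact more careful than the paper, which leaves the unsolvability-of-restrictions step implicit.
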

\begin{proof}
Let $w_1,w_2,\ldots,w_m$ be valid weight functions on $G_1,G_2,\ldots,G_m$ respectively, where \\$G_1,G_2,\ldots,G_m$ are all connected subgraphs of $G$ rooted at $r$. For each $i$ from $1$ to $m$ and for each $v\in V(G)-V(G_i)$, define $w_i(v)=0$. Let $a_1,a_2,\ldots,a_m$ be non-negative real numbers such that for each $v\in V(G)$, $w(v)=\sum_{i=1}^{m}a_iw_i(v)$. Let $p$ be an arbitrary $r$-unsolvable configuration. Then, the validity of $w_1,w_2,\ldots,w_m$ implies that:
\begin{align*}
w(p)=\sum_{i=1}^{m}a_iw_i(p)\leq \sum_{i=1}^{m}a_iw_i(1_G)=w(1_G).
\end{align*}
Since $p$ is arbitrary, $w$ is a valid weight function on $G$.
\qed
\end{proof}

Note that the Conic Lemma generalizes Lemma 5 of \cite{hurlbert2017}.

When studying large graphs, it is not easy to construct valid weight functions directly. A common method is studying subgraphs of the original graph, constructing valid weight functions on those smaller subgraphs, and finally applying Lemma \ref{l4} to get a valid weight function on the original graph.\\

Here is an example of how these lemmas may be applied.
\begin{theorem}\label{t1}
For any $k\in \mathbb N$, $\pi(C_{2k+1})=2\left \lfloor \frac{2^{k+1}}{3}\right \rfloor +1$.
\end{theorem}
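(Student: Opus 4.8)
The plan is to prove the two matching inequalities separately: an upper bound via a single valid weight function (Lemmas~\ref{l1}--\ref{l4}) and a lower bound via one explicit unsolvable configuration. Since $C_{2k+1}$ is vertex-transitive, $\pi(C_{2k+1},r)$ does not depend on $r$, so it suffices to fix a root. I would root the cycle at $r$ and label its remaining vertices $u_1,\dots,u_k,v_k,\dots,v_1$ in cyclic order, so that $d(r,u_i)=d(r,v_i)=i$ for each $i$ and $u_kv_k$ is the unique edge joining the two vertices at distance $k$ from $r$.

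For the upper bound, the key observation is that any breadth-first (tree) strategy ignores the edge $u_kv_k$ and is far too weak, so the construction must route weight \emph{across} the antipodal edge. Concretely, consider the two paths $A=(r,u_1,\dots,u_k,v_k)$ and $B=(r,v_1,\dots,v_k,u_k)$, each a connected subgraph of $C_{2k+1}$ rooted at $r$ that ascends one arm and then crosses $u_kv_k$. On each of these paths, Lemma~\ref{l3} (equivalently Lemma~\ref{l2}, applied to a path of length $k+1$) gives a valid weight function assigning $2^{(k+1)-d}$ to the vertex at distance $d$; call these $w_A$ and $w_B$. I would set $w=w_A+w_B$. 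A direct computation gives $w(u_i)=w(v_i)=2^{\,k+1-i}$ for $1\le i\le k-1$ and $w(u_k)=w(v_k)=2+1=3$, so $w$ is strictly positive off $r$, and the Conic Lemma (Lemma~\ref{l4}) certifies that $w$ is valid on $C_{2k+1}$. The minimum weight is $m=3$ (attained at $u_k,v_k$) and the total weight is $w(1_{C_{2k+1}})=2\bigl(\sum_{i=1}^{k-1}2^{\,k+1-i}+3\bigr)=2^{k+2}-2$. Lemma~\ref{l1} then gives $\pi(C_{2k+1},r)\le \lfloor (2^{k+2}-2)/3\rfloor+1$, and a short parity check (treating $k$ even and $k$ odd separately, according to $2^{k+2}\bmod 3$) shows this equals $2\lfloor 2^{k+1}/3\rfloor+1$.

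For the lower bound I would exhibit the configuration $p$ with $p(u_k)=p(v_k)=s$, where $s=\lfloor 2^{k+1}/3\rfloor$, and $p\equiv 0$ elsewhere, so that $|p|=2\lfloor 2^{k+1}/3\rfloor$ and $p(r)=0$. To reach $r$ one must assemble two pebbles on a neighbor of $r$, i.e.\ on $u_1$ or on $v_1$. Since every pebble begins on $\{u_k,v_k\}$ and the only route from the $v$-side to $u_1$ avoiding $r$ runs across the edge $u_kv_k$, the number of pebbles that can ever be gathered on $u_1$ is at most $\bigl\lfloor (p(u_k)+\lfloor p(v_k)/2\rfloor)/2^{\,k-1}\bigr\rfloor$. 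One checks that $p(u_k)+\lfloor p(v_k)/2\rfloor=\lfloor 3s/2\rfloor=2^{k}-1$ for both parities of $k$, whence at most $\lfloor (2^{k}-1)/2^{\,k-1}\rfloor=1$ pebble can reach $u_1$, and symmetrically at most one can reach $v_1$. Hence two pebbles never sit on a common neighbor of $r$, so $p$ is $r$-unsolvable and $\pi(C_{2k+1})\ge 2\lfloor 2^{k+1}/3\rfloor+1$. Combining the two bounds yields equality.

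The main obstacle is the upper bound, and specifically recognizing that the antipodal edge $u_kv_k$ must carry weight: the naive tree strategy yields only $2^{k+1}-1$, which is roughly $\tfrac{3}{2}$ times too large, and the exact answer emerges only from combining the two ``wrap-around'' path strategies with equal coefficients. The delicate point in justifying this is that the obvious monovariant $\Phi(p)=\sum_x p(x)2^{-d(r,x)}$ does not decide solvability on the cycle — the extremal configuration above satisfies $\Phi>1$ yet is unsolvable — so validity genuinely relies on the Conic Lemma rather than on a single potential. A secondary care point is the floor bookkeeping across $u_kv_k$ in the lower bound, which is exactly what produces the two parity cases and the factor $\lfloor 2^{k+1}/3\rfloor$.
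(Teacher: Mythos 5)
Your proof follows essentially the same route as the paper: the upper bound combines exactly the two ``wrap-around'' path weight functions of Figure~\ref{f1} (each crossing the antipodal edge) via Lemmas~\ref{l2} and~\ref{l4} and then applies Lemma~\ref{l1}, and the lower bound uses the same extremal configuration with $\left\lfloor \frac{2^{k+1}}{3}\right\rfloor$ pebbles on each of the two vertices at distance $k$ from $r$. The only difference is that you sketch the unsolvability verification directly (via a standard path-transfer bound, which is sound) where the paper defers to Theorem~8 of Pachter et al.
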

This theorem was first proved by Pachter et al. as Theorem 8 in \cite{opg}. It was also proved as Theorem 5 in \cite{Bunde2005} and as Theorem 11 in \cite{hurlbert2017}. Here, we reformalize the proof of the upper bound in \cite{hurlbert2017} using Lemmas~\ref{l1},\ref{l2}, and \ref{l4} in order to illustrate the technique of weight functions.

\begin{figure}[H]
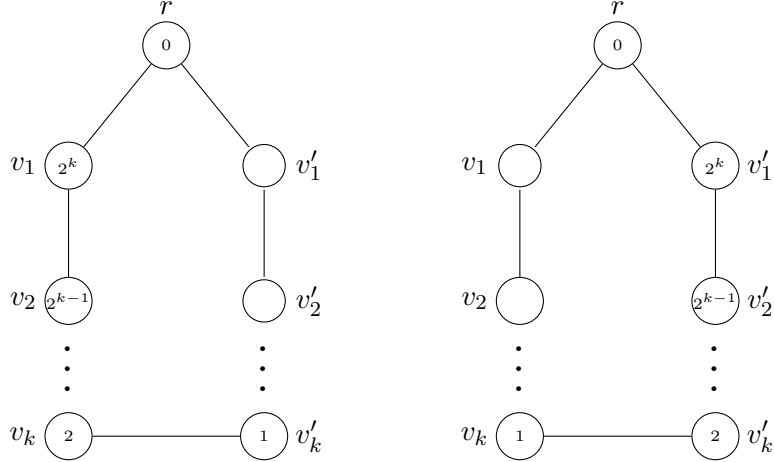

    \centering
\bt
\begin{scope}[xshift=-5cm]

\draw(-1,0)--(-1,1.5)--(0,2.3);
\draw(-1,-1.5)--(1,-1.5);
\draw[fill](-1,0)circle(2pt);
\draw(-1.5,0)node{$2^{k-1}$};
\draw[fill](-1,1.5)circle(2pt);
\draw(-1.5,1.5)node{$2^k$};
\draw[fill](-1,-1.5)circle(2pt);
\draw(-1.5,-1.5)node{$2$};

\draw[fill](1,-1.5)circle(2pt);
\draw(1.5,-1.5)node{$1$};
\draw(2,0.4)node{\large $+$};
\draw[fill](0,2.3)circle(2pt);
\draw(0,2.7)node{$r$};
\node[dot2] at (-1,-0.7) {$\cdots$};

\end{scope}
\begin{scope}[xshift=-1cm]
\draw(1,0)--(1,1.5)--(0,2.3);
\draw(-1,-1.5)--(1,-1.5);
\draw[fill](1,0)circle(2pt);
\draw(1.5,0)node{$2^{k-1}$};
\draw[fill](1,1.5)circle(2pt);
\draw(1.5,1.5)node{$2^k$};
\draw[fill](-1,-1.5)circle(2pt);
\draw(-1.5,-1.5)node{$1$};
\draw(2.97,0.4)node{\large $=$ $w:$};

\draw[fill](1,-1.5)circle(2pt);
\draw(1.5,-1.5)node{$2$};

\draw[fill](0,2.3)circle(2pt);
\draw(0,2.7)node{$r$};
\node[dot2] at (1,-0.7) {$\cdots$};
\end{scope}
\begin{scope}[xshift=5cm]

\draw(1,0)--(1,1.5)--(0,2.3)--(-1,1.5)--(-1,0);
\draw(-1,-1.5)--(1,-1.5);
\draw[fill](1,0)circle(2pt);
\draw(1.5,0)node{$2^{k-1}$};
\draw[fill](1,1.5)circle(2pt);
\draw(1.5,1.5)node{$2^k$};
\draw[fill](-1,-1.5)circle(2pt);
\draw(-1.5,-1.5)node{$3$};
\draw[fill](-1,0)circle(2pt);
\draw(-1.5,0)node{$2^{k-1}$};
\draw[fill](-1,1.5)circle(2pt);
\draw(-1.5,1.5)node{$2^k$};

\draw[fill](1,-1.5)circle(2pt);
\draw(1.5,-1.5)node{$3$};

\draw[fill](0,2.3)circle(2pt);
\draw(0,2.7)node{$r$};
\node[dot2] at (1,-0.7) {$\cdots$};
\node[dot2] at (-1,-0.7) {$\cdots$};
\end{scope}
\et
\caption{\label{f1}Valid weight functions on subgraphs of the odd cycle $C_{2k+1}$}
\end{figure}

\begin{proof}
The two weight functions in Figure \ref{f1} on $P_{k+2}$ are valid by Lemma \ref{l2}. Let weight function $w$ be the addition of the two weight functions as shown in Figure \ref{f1}. By Lemma \ref{l4}, $w$ is valid. Thus by Lemma \ref{l1}, $$\pi(C_{2k+1})=\pi(C_{2k+1},r)\leq \left \lfloor \frac{2\cdot 2^{k+1}-2}{3}\right \rfloor+1=2\left \lfloor \frac{2^{k+1}}{3}\right \rfloor +1.$$


To prove $\pi(C_{2k+1})>2\left \lfloor \frac{2^{k+1}}{3}\right \rfloor$, consider the configuration where each of the $2$ vertices with distance $k$ from $r$ has $\left \lfloor \frac{2^{k+1}}{3}\right \rfloor$ pebbles and all other vertices have no pebbles. It can be shown that this configuration is $r$-unsolvable. For more details, please refer to Theorem 8 in \cite{opg}. 
\qed
\end{proof}

\section{Weight Functions on Cubes}

In this section we focus on cubes and construct several valid weight functions on related graphs. An \textbf{$n$-dimensional cube}, denoted by $Q_n$, consists of $2^n$ vertices labeled by $\{0,1\}$-tuples of length $n$. Cubes are common and easily found structures in graphs. Further, they are symmetric and easy to describe. They were studied by Chung in the very first paper of the pebbling literature \cite{Chung}. Hurlbert also talked about cubes when introducing the weight function technique \cite{hurlbert2017}, and in \cite{cranston2015modified}, Cranston et al. obtained the pebbling number of $Q_3$ using non-tree weight functions. In this section, we obtain the pebbling number of $Q_4$ using non-tree weight functions and give a conjecture on a set of non-tree weight functions that, if true, would determine the pebbling number for a cube of any dimension.

\begin{proposition}
The weight function in Figure \ref{f2} is valid.
\end{proposition}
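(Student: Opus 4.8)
The plan is to reduce the validity condition to a finite, symmetry-compressed case check on $r$-unsolvable configurations. Before committing to the heavy work, I would first test whether $w$ is merely a conic combination of tree weight functions: if I can write $w=\sum_i a_i w_i$ with $a_i\geq 0$, each $w_i$ supported on a subtree of the cube rooted at $r$ and satisfying the doubling condition $w_i(v^+)\geq 2w_i(v)$ of the Weight Function Lemma (Lemma \ref{l3}), then the Conic Lemma (Lemma \ref{l4}) finishes immediately, provided every non-root vertex still receives positive weight. Since the entire point of this section is to produce genuinely \emph{non-tree} weight functions, I expect this linear system to be infeasible, which is exactly why a direct argument should be needed.

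For the direct argument I would exploit the fact that $r$-solvability is monotone: adding pebbles never destroys solvability, so the $r$-unsolvable configurations form a down-set under coordinatewise domination (and any unsolvable $p$ necessarily has $p(r)=0$). Consequently it suffices to verify $w(p)\leq w(1_G)$ only on the finitely many \emph{maximal} $r$-unsolvable configurations, since any unsolvable $p$ is dominated by such a configuration and $w$ is nonnegative and monotone in the pebble counts. I would then classify these maximal unsolvable configurations on the cube, leaning heavily on the automorphisms of $Q_n$ that fix $r$: the stabilizer of $r$ acts transitively on each distance class from $r$, so the classification collapses to a handful of orbit representatives. For each representative I would bound its weight by tracking how much weight can be charged toward $r$ along shortest paths, invoking the $2^{d(v,r)}$ scaling from Lemma \ref{l2}.

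The main obstacle is obtaining a clean and complete description of the maximal $r$-unsolvable configurations. Unsolvability on a non-tree graph is subtler than on a path, because pebbles can be routed along several shortest (and even non-shortest) paths to $r$, and a configuration may fail to be solvable for global rather than purely local reasons; the danger is overlooking a maximal unsolvable configuration whose weight violates the bound. Once this enumeration is pinned down and verified to be exhaustive, checking the inequality $w(p)\leq w(1_G)$ against the weights drawn in Figure \ref{f2} should reduce to a routine, symmetry-aided computation.
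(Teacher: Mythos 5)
There is a genuine gap here, and it comes from abandoning your Plan A too early. You correctly identify the cheap test --- is $w$ a conic combination of tree strategies? --- but then talk yourself out of running it on the grounds that the section is about non-tree weight functions. For the weight function of Figure \ref{f2} that expectation is wrong: delete either one of the two bottom edges of the $4$-cycle and you obtain a \emph{spanning} tree $G'$ of $G$ on which $w$ itself already satisfies the doubling condition of Lemma \ref{l3} (the vertex of weight $1/3$ has parent of weight $2/3=2\cdot\frac13$, and the vertices of weight $2/3$ have parent of weight $2\ge \frac43$). Since $G'$ spans $G$, every non-root vertex keeps positive weight, and since removing an edge only removes available pebbling moves, any $r$-unsolvable configuration on $G$ is $r$-unsolvable on $G'$; hence $w(p)=w_{G'}(p)\le w_{G'}(1_{G'})=w(1_G)$. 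That two-line argument is the paper's entire proof. (The genuinely non-tree weight functions appear later, for $Q_3$ with a pendant root; Figure \ref{f2} is only a building block, and it happens to be a tree strategy in disguise. You do not even need the full generality of Lemma \ref{l4} here --- a single subtree suffices.)

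Your Plan B, as written, is not a proof: you acknowledge yourself that the classification of maximal $r$-unsolvable configurations is the ``main obstacle'' and you never carry it out, so the argument terminates exactly where the real work would begin. The surrounding observations (monotonicity of solvability, reduction to maximal unsolvable configurations, symmetry of the stabilizer of $r$) are all sound and would in principle work for a graph this small, but until the enumeration is actually done and verified exhaustive, nothing has been established. The takeaway: actually run the feasibility test you propose before concluding it fails, because here it succeeds and makes the hard route unnecessary.
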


\begin{figure}
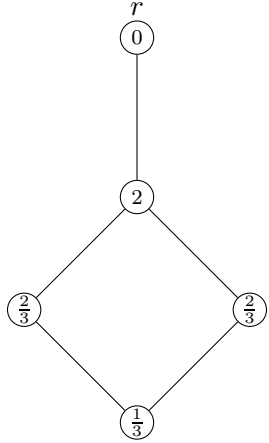

\centering
\bt
\draw(0,1.5)--(1.5,0)--(0,-1.5);
\draw(0,-1.5)--(-1.5,0)--(0,1.5)--(0,3.621);
\draw[fill](1.5,0)circle(2pt);
\draw(1.8,0)node{$2\over 3$};
\draw[fill](-1.5,0)circle(2pt);
\draw(-1.8,0)node{$2\over 3$};
\draw[fill](0,-1.5)circle(2pt);
\draw(0,-1.9)node{$1\over 3$};
\draw[fill](0,1.5)circle(2pt);
\draw(0.15,1.75)node{$2$};
\draw[fill](0,3.621)circle(2pt);
\draw(0,4)node{$r$};
\et

\caption{\label{f2}A valid weight function for a root attached to $Q_2$}
\end{figure}
\begin{proof}
Let the graph in Figure \ref{f2} be $G$ and the weight function in Figure \ref{f2} be $w$. If we delete one of the two bottom edges of $G$, we will get a new graph, denoted by $G'$. Because $G'$ is a tree and $w_{G'}(v^+)\geq 2w_{G'}(v)$ for each $v\in V(G')-\{r\}$, by Lemma \ref{l3} $w_{G'}$ is valid. Let $p$ be an arbitrary $r$-unsolvable configuration on $G$. Clearly $p$ is $r$-unsolvable on $G'$. By validity of $w_{G'}$, $$w_G(p)=w_{G'}(p)\leq w_{G'}(1_{G'})=w_{G}(1_G).$$ Since $p$ is arbitrary,  $w$ is valid on $G$.
\qed
\end{proof}

\begin{proposition}\label{p1}
Let $V(Q_3)=\{(x_1,x_2,x_3)|x_1,x_2,x_3\in \{0,1\}\}$. Let $w'$ be the weight function on $Q_3$ defined by $w'(1,0,0)=w'(0,1,0)=w'(0,0,1)=2,$ $w'(1,1,0)=w'(1,0,1)=w'(0,1,1)={4\over 3}$, $w'(1,1,1)=1$, with $(0,0,0)$ being the root, as shown in Figure~\ref{f3}. Then, $w'$ is valid.
\end{proposition}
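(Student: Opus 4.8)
The plan is to prove validity of $w'$ on $Q_3$ by the decomposition-plus-Conic-Lemma strategy that the paper has just illustrated for odd cycles. Since $Q_3$ rooted at $(0,0,0)$ is not a tree, I cannot apply Lemma~\ref{l3} directly; instead I would exhibit $w'$ as a conic combination of valid weight functions supported on connected rooted subgraphs of $Q_3$, and then invoke Lemma~\ref{l4}. First I would note that every nonroot vertex receives positive weight under $w'$ (the weights are $2$, $4/3$, $1$), so the positivity hypothesis of the Conic Lemma is automatically satisfied; the only real work is the decomposition into valid pieces.

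The natural candidate pieces are shortest-path subtrees (geodesic paths) from each weight-$1$ corner back to the root, weighted geometrically as in Lemma~\ref{l2} so that each satisfies the $w(v^+)\ge 2w(v)$ condition of Lemma~\ref{l3}. Concretely I would look for nonnegative coefficients $a_i$ and rooted paths $P_i$ from $(1,1,1)$ (and possibly its neighbors) to $(0,0,0)$, each carrying weights of the form $(1,2,4,\dots)$ along the path, so that the superposition reproduces the target values: the three distance-one vertices at $2$, the three distance-two vertices at $4/3$, and the distance-three vertex at $1$. The distance-three vertex $(1,1,1)$ lies on three geodesics to the root, and by symmetry I expect to split its unit weight into three equal contributions of $1/3$, each feeding into a path through one of the distance-two vertices; summing the two geodesics passing through each distance-two vertex should recover the $4/3$ there, and the three geodesics through each distance-one vertex should recover the $2$. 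I would set up and solve this small linear system for the $a_i$, choosing a symmetric family of six monotone geodesics (one for each of the $3!$ orderings of flipping coordinates), which respects the symmetry of both $Q_3$ and $w'$.

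Each chosen geodesic is a path of length three rooted at $r$; restricted to it, the assigned weights are a geometric doubling sequence, so Lemma~\ref{l2} (equivalently Lemma~\ref{l3}) certifies each piece as a valid weight function on that connected subgraph. The hypotheses of Lemma~\ref{l4} are then met, and the Conic Lemma delivers validity of $w'$ on $Q_3$ immediately.

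The main obstacle I anticipate is purely the bookkeeping of the decomposition: verifying that some explicit symmetric choice of geodesic paths with suitable nonnegative coefficients sums exactly to the prescribed values $2,\,4/3,\,1$ at all three distance classes simultaneously. This is a finite linear-algebra check rather than a conceptual difficulty, but one must confirm that the coefficients come out nonnegative (so that it is a genuine \emph{conic} combination) and that the fractional weights $4/3$ and $1$ are matched precisely. If a single clean symmetric family does not close the system, I would allow a second type of piece (for instance a path that reroutes through a different distance-two vertex, or a star-like valid subtree) to absorb the discrepancy, still keeping all coefficients nonnegative. Once such a decomposition is in hand, the application of Lemmas~\ref{l3} and~\ref{l4} is routine.
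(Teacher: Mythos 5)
Your plan is viable and leads to a correct proof, but it is not the paper's route, and your first-choice decomposition does not close on its own. The paper writes $w'$ as the sum of three copies of the weight function of Figure~\ref{f2} (the root-attached-to-$Q_2$ function with weights $2,\tfrac{2}{3},\tfrac{2}{3},\tfrac{1}{3}$), one supported on each facet $V_i=\{x_i=1\}\cup\{(0,0,0)\}$, and then invokes Lemma~\ref{l4}; each copy was certified beforehand by deleting one edge of the $4$-cycle and applying Lemma~\ref{l3} to the resulting spanning tree. Your six symmetric geodesics alone cannot reproduce $w'$: giving each the Lemma~\ref{l2} weights $4,2,1$ and coefficient $a$, every distance-$1$ vertex lies on two geodesics and receives $8a$ while every distance-$2$ vertex receives $4a$, so matching $w'(1,1,1)=1$ forces $a=\tfrac{1}{6}$ and yields $\tfrac{4}{3}$ and $\tfrac{2}{3}$ at distances $1$ and $2$ instead of the required $2$ and $\tfrac{4}{3}$; patching with length-$2$ geodesics then overshoots at distance $1$ and would require a negative coefficient. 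The star-like fallback you anticipate does succeed, however: let $T_i$ be the claw consisting of $r$, the center $e_i$ with weight $2$, and the two distance-$2$ neighbors of $e_i$ as leaves with weight $1$ each (valid by Lemma~\ref{l3} since $2\ge 2\cdot 1$). Then
\[
w'=\frac{1}{6}\sum_{\sigma\in S_3}P_\sigma+\frac{1}{3}\sum_{i=1}^{3}T_i,
\]
where $P_\sigma$ ranges over the six geodesics: each distance-$1$ vertex receives $\tfrac{4}{3}+\tfrac{2}{3}=2$, each distance-$2$ vertex receives $\tfrac{2}{3}+\tfrac{1}{3}+\tfrac{1}{3}=\tfrac{4}{3}$, and $(1,1,1)$ receives $1$, so Lemma~\ref{l4} applies. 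In fact the two geodesics and one claw attached to a fixed coordinate $i$ sum exactly to the paper's Figure~\ref{f2} function on the facet $x_i=1$, so your decomposition refines the paper's. What your route buys is the observation that this particular $w'$ is a conic combination of pure tree strategies (in contrast to the genuinely non-tree function of Lemma~\ref{l5}); what it costs is having to solve for the coefficients rather than reusing the already-proved proposition, and your write-up is incomplete until those coefficients are exhibited and checked to be nonnegative as above.
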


\begin{figure}
    \centering
\begin{tikzpicture}
\begin{scope}[xshift=-4cm]
\draw(0,0)--(-2.25,1.7)--(0,3.85)--(2.25,1.7)--(0,0);
\draw(0,1.7)--(-2.25,3.85)--(0,5.55)--(2.25,3.85)--(0,1.7)--(0,0);
\draw(-2.25,1.7)--(-2.25,3.85);
\draw(2.25,1.7)--(2.25,3.85);
\draw(0,5.55)--(0,3.85);
\draw[fill](0,0)circle(2pt);
\draw (0,-0.3) node {$1$};
\draw[fill](0,3.85)circle(2pt);
\draw (0,3.5) node {$2$};
\draw[fill](0,1.7)circle(2pt);

\draw(0,-0.9)node{\large $w'$};

\draw (0,2.05) node {$4\over 3$};
\draw[fill](2.25,1.7)circle(2pt);

\draw (2.5,1.7) node {$4\over 3$};
\draw[fill](-2.25,1.7)circle(2pt);
\draw (-2.5,1.7) node {$4\over 3$};
\draw[fill](-2.25,3.85)circle(2pt);
\draw (-2.5,3.85) node {$2$};
\draw[fill](2.25,3.85)circle(2pt);
\draw (2.5,3.85) node {$2$};

\draw[fill] (0,5.55)circle(2pt);

\draw (0,5.8) node {$r$};
\end{scope}

\begin{scope}[xshift=4cm]
\draw(0,0)--(-2.25,1.7)--(0,3.85)--(2.25,1.7)--(0,0);
\draw(0,1.7)--(-2.25,3.85)--(0,5.55)--(2.25,3.85)--(0,1.7)--(0,0);
\draw(-2.25,1.7)--(-2.25,3.85);
\draw(2.25,1.7)--(2.25,3.85);
\draw(0,5.55)--(0,3.85);
\draw[fill](0,0)circle(2pt);
\draw (0,-0.3) node {$(1,1,1)$};
\draw[fill](0,3.85)circle(2pt);
\draw (0,3.25) node {$(0,1,0)$};
\draw[fill](0,1.7)circle(2pt);

\draw (0,2.3) node {$(1,0,1)$};
\draw[fill](2.25,1.7)circle(2pt);

\draw(0,-0.9)node{\large $Q_3$};

\draw (2.9,1.7) node {$(0,1,1)$};
\draw[fill](-2.25,1.7)circle(2pt);
\draw (-2.9,1.7) node {$(1,1,0)$};
\draw[fill](-2.25,3.85)circle(2pt);
\draw (-2.9,3.85) node {$(1,0,0)$};
\draw[fill](2.25,3.85)circle(2pt);
\draw (2.9,3.85) node {$(0,0,1)$};

\draw[fill] (0,5.55)circle(2pt);

\draw (0,5.85) node {$(0,0,0)$};
\end{scope}
\end{tikzpicture}
    \caption{\label{f3}A valid weight function on $Q_3$, denoted by $w'$}

\end{figure}
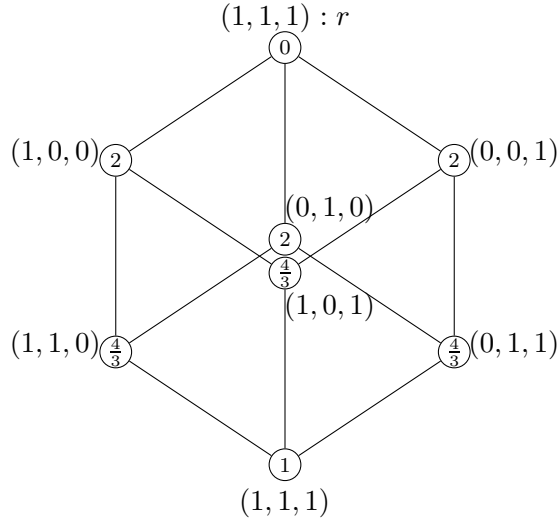

\begin{proof}
Notice that $w'$ is the addition of $3$ copies of the weight function in Proposition~\ref{p1}. In particular, consider vertex sets 
\begin{align*}
V_1=\{(1,x_2,x_3)|x_2,x_3\in \{0,1\}\}\cup\{(0,0,0)\},\\
V_2=\{(x_1,1,x_3)|x_1,x_3\in \{0,1\}\}\cup\{(0,0,0)\},\\
V_3=\{(x_1,x_2,1)|x_1,x_2\in \{0,1\}\}\cup\{(0,0,0)\}.
\end{align*}
Then, the $3$ induced subgraphs of $V_1,V_2,$ and $V_3$, with $(0,0,0)$ being root on each of them, are the graphs that these $3$ copies of the weight function in Proposition~\ref{p1} lie on. Thus by Lemma \ref{l4}, $w'$ is valid.
\qed
\end{proof}

\begin{lemma}\label{l5}
Let $w$ be the weight function on the graph $G$ in Figure~\ref{f4} defined by $w(x_1)=w(x_2)=w(x_3)=2,$ $w(y_1)=w(y_2)=w(y_3)={4\over 3}$, $w(u)=4$, and $w(z)=1$, with $r$ being the root. Then, $w$ is valid.
\end{lemma}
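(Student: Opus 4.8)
The plan is to prove validity through the Conic Lemma (Lemma~\ref{l4}), reusing the work already done in Proposition~\ref{p1} rather than analyzing $G$ from scratch. The first thing I would observe is that the weights $w(x_i)=2$, $w(y_i)=\frac{4}{3}$, and $w(z)=1$ are \emph{exactly} the weights of $w'$ on $Q_3$ from Proposition~\ref{p1}; the only genuinely new vertex is $u$, which carries weight $4$ and is joined to the root $r$. So the natural reading is that $G$ is the $Q_3$ of Proposition~\ref{p1} with one extra leaf hung at $r$, and that $w$ restricted to that $Q_3$ is precisely $w'$. The strategy is then to peel $w$ apart along this structural split and certify each piece separately.

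Concretely, I would write $w$ as the conic combination $w=w'+w_u$, where $w'$ is extended by $0$ off the $Q_3$ and $w_u$ is supported on the single edge $ru$ with $w_u(u)=4$. Proposition~\ref{p1} already hands us the validity of $w'$ on the $Q_3$ subgraph rooted at $r$. For $w_u$ the underlying subgraph is the single edge $\{r,u\}$, which has diameter $1$: any $r$-unsolvable configuration there has $p(r)=0$ and at most one pebble on $u$ (two pebbles on $u$ would reach $r$), so its weight is at most $w_u(u)=w_u(1_{\{r,u\}})$, and $w_u$ is valid for \emph{any} positive leaf weight, in particular $4$. Both $w'$ and $w_u$ are valid weight functions on connected subgraphs of $G$ containing $r$, and the combined $w$ is positive on every vertex other than $r$; hence Lemma~\ref{l4} applies verbatim and delivers the validity of $w$. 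I would also record that $w(1_G)=4+3\cdot 2+3\cdot\frac{4}{3}+1=15$ with minimum nonzero weight $1$, which is the data Lemma~\ref{l1} later consumes, and note that the specific value $w(u)=4$ is chosen for that downstream bound rather than being forced by validity.

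The step that needs the most care is the bookkeeping ensuring the decomposition is \emph{exact}. This clean split works precisely because $u$ sits as a leaf at the root, so $w_u$ deposits no weight inside the $Q_3$ and leaves $w'$ undisturbed. The main obstacle would appear if $u$ were instead attached at distance $\ge 2$ from $r$ (or shared neighbors with the $x_i$): then any root-to-$u$ path carrying $u$'s weight would overlap the $Q_3$ vertices, one could no longer subtract off the $u$-contribution cleanly, and each summand would have to be rebalanced so that it remains valid on its own subgraph. In that case I would abandon the shortcut of reusing $w'$ intact and instead rebuild $w$ as a sum of copies of the $Q_2$ gadget of Figure~\ref{f2} together with a suitably weighted root-to-$u$ path, checking each summand against its own $w_i(1)$ — and that recombination, not the leaf case, is where the genuine difficulty lies.
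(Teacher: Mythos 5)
Your decomposition $w = w' + w_u$ does not meet the hypotheses of the Conic Lemma, and this is exactly where the proof breaks. Lemma~\ref{l4} requires each summand to be a valid weight function on a connected subgraph of $G$ \emph{rooted at $r$}; validity is always relative to a root, and Proposition~\ref{p1} certifies $w'$ only for the root $(0,0,0)$, which in Figure~\ref{f4} is the vertex $u$, not $r$. The $Q_3$ subgraph carrying $w'$ does not even contain $r$, and if you enlarge it to $Q_3\cup\{r\}$ and ask whether $w'$ (extended by $0$) is valid for the root $r$, the answer is no: the configuration with $12$ pebbles on $z$ and none elsewhere is $r$-unsolvable (a single pile of $q$ pebbles at distance $d$ reaches the target only if $q\ge 2^d$, and here $12<2^4$), yet its $w'$-weight is $12 > 11 = w'(1_{Q_3})$. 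The underlying point is that an $r$-unsolvable configuration need not be $u$-unsolvable, so the bound from Proposition~\ref{p1} simply does not apply to the configurations you must control. What Lemma~\ref{l5} really encodes is the ability to move \emph{two} pebbles to $u$, i.e.\ a $2$-fold pebbling statement for $Q_3$ at its root; the introduction explicitly flags that pushing a weight function across a pendant edge at the root in this way is not known how to do in general, which is why the result is not a one-line corollary.

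The paper's actual proof is a direct case analysis: assume $w(p) > w(1_G) = 15$ and $p$ is $r$-unsolvable, then split on $p(u)$ and on the pebble counts over $X=\{x_1,x_2,x_3\}$, $Y=\{y_1,y_2,y_3\}$, and $z$. Proposition~\ref{p1} is invoked there only in the restricted situation where one pebble is already on $u$ (or can be placed there) and the residual weight on $Q_3\setminus\{u\}$ still exceeds $11$, so that a \emph{second} pebble can be brought to $u$ --- that is the legitimate way to recycle the $Q_3$ result, and the bulk of the work (Cases 2--5) handles configurations where this shortcut is unavailable. Your observation that $w_u$ on the edge $ru$ is valid for any leaf weight is correct but carries no force here, and your closing remark locates the difficulty in the wrong place: the problematic feature is not whether $u$'s weight overlaps the cube, but that the root lies outside the cube, so no conic splitting of $w$ into subgraphs rooted at $r$ can offload the cube's weight onto Proposition~\ref{p1}.
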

\begin{figure}
\centering
\begin{tikzpicture}
\begin{scope}[xshift=-4cm]
\draw(0,0)--(-2.25,1.7)--(0,3.85)--(2.25,1.7)--(0,0);
\draw[fill](0,5.55)--(0,7)node[anchor=-90]{$r$}circle(2pt);
\draw(0,1.7)--(-2.25,3.85)--(0,5.55)--(2.25,3.85)--(0,1.7)--(0,0);
\draw(-2.25,1.7)--(-2.25,3.85);
\draw(2.25,1.7)--(2.25,3.85);
\draw(0,5.55)--(0,3.85);
\draw[fill](0,0)circle(2pt);
\draw (0,-0.3) node {$1$};
\draw[fill](0,3.85)circle(2pt);
\draw (0,3.5) node {$2$};
\draw[fill](0,1.7)circle(2pt);

\draw(0,5.55)node[anchor=225]{$4$};

\draw(0,-0.9)node{\large $w$};

\draw (0,2.05) node {$4\over 3$};
\draw[fill](2.25,1.7)circle(2pt);

\draw (2.5,1.7) node {$4\over 3$};
\draw[fill](-2.25,1.7)circle(2pt);
\draw (-2.5,1.7) node {$4\over 3$};
\draw[fill](-2.25,3.85)circle(2pt);
\draw (-2.5,3.85) node {$2$};
\draw[fill](2.25,3.85)circle(2pt);
\draw (2.5,3.85) node {$2$};

\draw[fill] (0,5.55)circle(2pt);

\end{scope}

\begin{scope}[xshift=4cm]
\draw(0,0)--(-2.25,1.7)--(0,3.85)--(2.25,1.7)--(0,0);
\draw[fill](0,5.55)--(0,7)node[anchor=-90]{$r$}circle(2pt);
\draw(0,1.7)--(-2.25,3.85)--(0,5.55)--(2.25,3.85)--(0,1.7)--(0,0);
\draw(-2.25,1.7)--(-2.25,3.85);
\draw(2.25,1.7)--(2.25,3.85);
\draw(0,5.55)--(0,3.85);
\draw[fill](0,0)circle(2pt);
\draw (0,-0.3) node {$z$};
\draw[fill](0,3.85)circle(2pt);
\draw (0,3.5) node {$x_2$};
\draw[fill](0,1.7)circle(2pt);

\draw (0,2.05) node {$y_2$};
\draw[fill](2.25,1.7)circle(2pt);

\draw(0,-0.9)node{\large $G$};

\draw (2.5,1.7) node {$y_1$};
\draw[fill](-2.25,1.7)circle(2pt);
\draw (-2.5,1.7) node {$y_3$};
\draw[fill](-2.25,3.85)circle(2pt);
\draw (-2.5,3.85) node {$x_1$};
\draw[fill](2.25,3.85)circle(2pt);
\draw (2.5,3.85) node {$x_3$};

\draw[fill] (0,5.55)circle(2pt);

\draw(0,5.55)node[anchor=225]{$u$};
\end{scope}
\end{tikzpicture}
    \caption{\label{f4}A valid weight function for a root attached to $Q_3$, denoted by $w$}
\end{figure}

\begin{proof}
We begin with terminology helpful for our proof. Let the graph in Figure \ref{f4} be $G$. Let the subgraph induced by $\{x_1,x_2,x_3,y_1,y_2,y_3,z\}$ be $G'$. Let the weight functions in Figure \ref{f4} and Figure \ref{f3} be $w$ and $w'$, respectively. Let $X=\{x_1,x_2,x_3\}$ and $Y=\{y_1,y_2,y_3\}$. For each $y_i\in Y$, define its parents, denoted by $pa(y_i)$, as the set of vertices adjacent to $y_i$ that are closer to $r$. Hence,  $pa(y_1)=\{x_2,x_3\},pa(y_2)=\{x_1,x_3\},pa(y_3)=\{x_1,x_2\}$. Let $p$ be an arbitrary configuration such that $w(p)>w(1_G)=15$. By definition of validity in Section 1.1, it suffices to prove that $p$ is $r$-solvable. Without loss of generality, assume $p(y_1)\geq p(y_2)\geq p(y_3)$. For purposes of proof by contradiction, assume $p$ is $r$-unsolvable. Clearly $p(u)\leq 1$.\\

\textbf{Case 1:} Suppose that $p(u)=1$.

If $p(u)=1$, then $w_{G'}(p)=w(p)-w(u)\cdot p(u)>15-4=11$. By Proposition~\ref{p1} we can use pebbles on $G'$ to move another pebble to $u$, then from $u$ a pebble can be moved to $r$. This shows $p$ is $r$-solvable. \\

In the rest of the cases, suppose that $p(u)=0$.\\

\textbf{Case 2:} Suppose that $p(x_i)\geq 2$ for some $x_i\in X$.

Move $1$ pebble from $x_i$ to $u$ to get a new configuration $p'$. Clearly $w(p')=w(p)>15$. By Case $1$, $p'$ is $r$-solvable. Thus $p$ is also $r$-solvable. \\

In the rest of the cases, suppose that $p(u)=0$ and $p(x_i)\leq 1$ for each $x_i\in X$.\\

\textbf{Case 3:} Suppose that $p(x_i)=1$ for each $x_i\in V$.

Observe that if we can move $2$ pebbles from $\{y_1,y_2,y_3,z\}$ to $X$, then we can move them to two different vertices and $p$ is immediately $r$-solvable. Suppose we can move at most $1$ pebble from $Y$ to $X$. Then, $p(Y)\leq 1\cdot2+3\cdot 1=5$. Thus $$p(z)\geq {w(p)-5\cdot \frac{4}{3}-3\cdot 2\over 1}>2$$ so we can move $1$ pebble from $z$ to any vertex of $Y$. But $p(z)>2$ implies that $p(Y)\leq 3$, otherwise we could move $2$ pebbles from $\{y_1,y_2,y_3,z\}$ to $X$. Following similar logic, $p(Y)\leq 3$ implies that $p(z)\geq 6$, $p(z)\geq 6$ implies that $p(Y)= 0$, and $p(Y)= 0$ implies that $p(z)\geq 8$. Finally, $p(z)\geq 8$ implies that we can move two pebbles from $z$ to $X$, which shows $p$ is $r$-solvable.\\

 In the rest of the cases, suppose that $p(x_i)=0$ for some $x_i\in X$.\\

\textbf{Case 4:} Suppose that $w(p)\geq 16$.

\textbf{Subcase 4.1:} Suppose that some vertex in $X$ has $0$ pebbles and the two other vertices of $X$ have $1$ pebble. We will show the proof when $p(x_1)=p(x_2)=1$ and $p(x_3)=0$.

If $p(y_i)>2$ for some $y_i\in Y$, we can move one pebble from $y_i$ to some $x_j\in pa(y_i)$ such that $p(x_j)=1$ to form a new configuration $p'$. Notice that $w(p')=w(p)-{2\over 3}>15$. Thus by Case $2$, $p'$ is $r$-solvable.

If $p(y_i)\leq 1$ for each $y_i\in Y$, then $p(z)\geq 16-2\cdot 2-3\cdot \frac{4}{3}=8$. From $z$ we can move $1$ pebble to each of $x_1$ and $x_2$ and clearly $p$ is $r$-solvable.\\

\textbf{Subcase 4.2:} Suppose that $p(y_j)\geq 2$ and $p(x_i)=1$ for some $y_j\in Y$ and $x_i\in pa(y_j)$.

Clearly, we can remove these $3$ pebbles to place a pebble on $u$ to form a new configuration $p'$. Notice that $w_{G'}(p')=w(p)-1\cdot 2-2\cdot\frac{4}{3}>11$. By Proposition \ref{p1}, we can move another pebble from $G'$ to $u$, so $p$ is $r$-solvable.\\

In the next two subcases, suppose that for each $y_i,y_j\in Y$, if their common parent has exactly one pebble, then $p(y_i)\leq 1$ and $p(y_j)\leq 1$.\\

\textbf{Subcase 4.3:} Suppose that $p(x_i)=0$ for each $x_i\in X$.

Because $p$ is $r$-unsolvable and $p(y_1)\geq p(y_2)\geq p(y_3)$, the following two implications are clear:
\begin{align*}
p(y_1)\geq 6&\Rightarrow p(y_2)\leq 1\textnormal{ and } p(y_3)\leq 1\\
p(y_1)\geq 4&\Rightarrow p(y_2)+p(y_3)\leq 4.
\end{align*}
These two implications further imply that $Y$ has at most $9$ pebbles. Since $w(z)=1$, we know $p(z)\geq \left \lceil 16-9\cdot\frac{4}{3}\right \rceil =4$ and we can move $2$ pebbles from $z$ to any vertex of $Y$.

If $p(y_1)+p(y_2)=6$, moving $2$ pebbles from $z$ to $Y$ can make $p(y_1)$ and $p(y_2)$ both even and they can move $\frac{6+2}{2}=4$ pebbles to $pa(y_1)\cap pa(y_2)$, and $p$ is clearly $r$-solvable. Hence $p(y_1)+p(y_2)<6$. Since $p(y_1)\geq p(y_2)\geq (y_3)$, we also know $p(y_3)\leq 2$.

Notice that if $p(y_1)+p(y_2)=5$ and $p(y_3)=2$, then $p(y_1)=3$ and $p(y_2)=p(y_3)=2$. But since $p(z)=4$, $p$ is $r$-solvable. Hence $p(Y)\leq 6$ and $p(z)\geq \left \lceil 16-6\cdot \frac{4}{3}\right \rceil=8$. If $p(Y)\geq 4$, moving $4$ pebbles from $z$ to $Y$ could make each $p(y_i)$ even, and then $p(Y)=8$ forces $p$ to be $r$-solvable. Thus $p(Y)\leq 3$. Following similar logic, $p(Y)\leq 3$ implies that $p(z)\geq 12$, $p(z)\geq 12$ implies that $p(Y)\leq 1$,  $p(Y)\leq 1$ implies that $p(z)\geq 14$, and $p(z)\geq 14$ implies that $p(Y)=0$. Finally, $p(Y)=0$ implies that $p(z)\geq 16$, forcing $p$ to be $r$-solvable.\\

\textbf{Subcase 4.4:} Suppose that some vertex $x_i\in X$ has $1$ pebble and the two other vertices of $X$ have $0$ pebbles.

Let $y_j,y_k,y_t\in Y$ such that $x_i\in pa(y_j)\cap pa(y_k)$ and $x_i\not \in pa(y_t)$. We know by assumption that $p(y_j)\leq 1,p(y_k)\leq 1$. Thus $p(y_t)\cdot w(y_t)+p(z)\cdot w(z)\geq 16-1\cdot 2-2\cdot\frac{4}{3}=\frac{34}{3}$.

If $p(y_j)=p(y_k)=0$, then $p(y_t)\cdot w(y_t)+p(z)\cdot w(z)\geq 14$. Observe that $p(x_i)=1$ implies that $p(z)\leq 11$, so $p(y_t)\geq \left \lceil \frac{14-11}{\frac{4}{3}}\right \rceil=3$. This implies that $p(y_t)\cdot w(y_t)+p(z)\cdot w(z)\geq 14-3\cdot\frac{4}{3}=10$. Move pebbles from $z$ to $y_t$ until at most $1$ pebble is left at $z$. Since $2>\frac{4}{3}$, $y_t$ now has at least $3+\left \lceil \frac{10-1}{2}\right \rceil=8$ pebbles and $p$ is $r$-solvable.

If $p(y_j)=1$ and $p(y_k)=0$, then $p(z)\leq 9$, so $w(y_t)\geq 16-2-9-\frac{4}{3}=\frac{11}{3}$ and $p(y_t)\geq 3$. Following similar logic, $p(y_t)\geq 3$ implies that $p(z)\leq 3$. Finally, $p(z)\leq 3$ implies that $p(y_t)\geq 8$, which forces $p$ to be $r$-solvable.

Following similar logic, $p(y_j)=p(y_k)=1$ implies that $p(z)\leq 7$, $p(z)\leq 7$ implies that $p(y_t)\geq 4$, $p(y_t)\geq 4$ implies that $p(z)\leq 1$. Finally, $p(z)\leq 1$ implies that $p(y_t)\geq 8$, forcing $p$ to be $r$-solvable.\\

Notice that $w(p)$ is a multiple of $\frac{1}{3}$ and therefore we are left with only two possibilities.\\

\textbf{Case 5:} Suppose that $w(p)=15\frac{1}{3}$ or $15\frac{2}{3}$.

\textbf{Subcase 5.1}: Suppose that $p(x_i)=p(x_j)=1$ for $x_i,x_j\in X$ and the other vertex in $X$ has $0$ pebbles.

First, suppose that $w(p)=15\frac{1}{3}$. It follows that $w(Y)+w(z)=11\frac{1}{3}$. If $p(y_k)=1$ and $p(z)=10$ for some $y_k\in Y$, we can move $5$ pebbles from $z$ to $y_k$, then move $3$ pebbles to any of $y_k$'s parents with $1$ pebble and it follows that $p$ is $r$-solvable. If $p(Y)=4$ and $p(z)=6$, clearly $p(y_1)= 2$. If $p(y_1)=p(y_2)=2$, move $1$ pebble from $\{y_1,y_2\}$ to each of $\{x_i,x_j\}$, and it follows that $p$ is $r$-solvable. If $p(y_2)<2$, then $p(y_2)=p(y_3)=1$. We can move $1$ pebble from $z$ to each of $\{y_2,y_3\}$ and then $1$ pebble from $\{y_2,y_3\}$ to each of $\{x_i,x_j\}$, which forces $p$ to be $r$-solvable. If $p(Y)=7$ and $p(z)=2$, clearly $p(y_1)\leq 5$, so $p(y_2)\geq 1$. We can move $1$ pebble from $z$ to $y_2$ and then move $1$ pebble from $\{y_1,y_2\}$ to each of $\{x_i,x_j\}$, which implies that $p$ is $r$-solvable.

Second, suppose that $w(p)=15\frac{2}{3}$. If $p(Y)=2$ and $p(z)=9$, remove $4$ pebbles from $z$ to add $1$ pebble on $x_i$ and remove another $4$ pebbles from $z$ to add $1$ pebble on $x_j$. It follows that $p$ is $r$-solvable. If $p(Y)=5$ and $p(z)=5$, by the pigeonhole principle $p(y_1)\geq 2$. Remove $2$ pebbles from $y_1$ and add $1$ pebble to one of $y_1$'s parent from $\{x_i,x_j\}$ and remove $4$ pebbles from $z$ to add $1$ pebble to the other vertex of $\{x_i,x_j\}$. This shows $p$ is $r$-solvable. If $p(Y)=8$ and $p(z)=1$, observe that when $p(Y)\geq 6$ and all $p(y_i)$ is even, $p$ is solvable. When $p(Y)=8$, since at most $2$ vertices of $Y$ have an odd number of pebbles, ignoring $1$ pebble from each vertex of the form $y_i$ with odd pebbles on it implies that $p$ is $r$-solvable.\\

\textbf{Subcase 5.2:} Suppose that $p(x_i)=1$ for some $x_i\in X$, and the other two vertices of $X$ have $0$ pebbles.

Let $n=\left \lfloor \frac{p(z)}{2}\right \rfloor$ and $m=p(Y)$. Let $y_j,y_k,y_t\in Y$ such that $x_i\in pa(y_j)\cap pa(y_k)$ and $x_i\not \in pa(y_t)$. Notice that we can remove $2n$ pebbles from vertex $z$ and distribute $n$ pebbles $Y$.

First, suppose that $n\geq 2$. It follows that $$p(y_j)+p(y_k)\leq 5-n,p(y_j)+p(y_t)\leq 7-n,\textnormal{ and }p(y_k)+p(y_t)\leq 7-n.$$ Then $2m\leq 19-3n$. If $w(p)=15\frac{1}{3}$, then $n=\frac{p(z)}{2}$ and $2m=20-3n$. It follows that $20\leq 19$, forming a contradiction. If $w(p)=15\frac{2}{3}$, then $p(z)= 2n+1$ and $2m=\frac{41-3p(z)}{2}\geq 19-3n$. Thus $2m=19-3n$, which implies that every inequality here takes equality. Thus $$p(y_j)=p(Y)-(p(y_k)+p(y_t))=\frac{19-3n}{2}-7+n=\frac{5-n}{2}.$$ Similarly $p(y_k)=\frac{5-n}{2}$ and $p(y_t)=\frac{9-n}{2}$. But $p(z)$ can only be $7$ or $11$. Readers can easily verify that $p(y_j)=p(y_k)=1,p(y_t)=3,p(z)=7$ and $(y_j)=p(y_k)=0,p(y_t)=2,p(z)=11$ are two $r$-solvable configurations.

Second, suppose that $n<2$. If $w(p)=15\frac{1}{3}$, then $p(Y)=10$ and $p(z)=0$. By Subcase $4.3$, $p(Y)$ is at most $9$, forming a contradiction. If $w(p)=15\frac{2}{3}$, then $p(Y)=8$ and $p(z)=3$. Thus from $z$ we can move a pebble to $Y$. Observe that when $p(Y)=8$ and $p(y_a)$ is even for each $y_a\in Y$, the configuration is $r$-solvable. When $p(Y)=8$, either each $p(y_a)$ is even or exactly $2$ of them are odd. In the latter case, we can move one pebble from $z$ to a vertex $y_a\in Y$ where $p(y_a)$ is odd and ignore a pebble on another vertex $y_b\in Y$ where $p(y_b)$ is odd. This shows $p$ is $r$-solvable.\\

Since all possible configurations have been considered, we conclude that $w$ is valid.
\qed
\end{proof}

\begin{theorem}\label{t2}
Let $V(Q_4)=\{(x_1,x_2,x_3,x_4)|x_1,x_2,x_3,x_4\in \{0,1\}\}$, as shown in Figure~\ref{f5}. Let $w^*$ be the weight function on $Q_4$ defined by $w^*(v)=4$ for each $v\neq (0,0,0,0)$, with $(0,0,0,0)$ being the root. Then, $w^*$ is valid.
\end{theorem}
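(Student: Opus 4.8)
The plan is to realize $w^*$ as a conic combination of four copies of the weight function from Lemma~\ref{l5}, one for each coordinate direction, and then invoke the Conic Lemma (Lemma~\ref{l4}). First I would set up the decomposition. For each $i\in\{1,2,3,4\}$, let $V_i=\{v\in V(Q_4):v_i=1\}\cup\{(0,0,0,0)\}$ and let $G_i$ be the subgraph of $Q_4$ induced by $V_i$. Fixing one coordinate of $Q_4$ to $1$ yields a copy of $Q_3$, and within $Q_4$ the root $(0,0,0,0)$ is adjacent, among these vertices, only to the unit vector $e_i$ carrying a single $1$ in position $i$. Hence $G_i$ is precisely the graph $G$ of Lemma~\ref{l5}, with $u=e_i$ playing the role of the vertex adjacent to $r$ and the antipode $(1,1,1,1)$ playing the role of $z$. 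I then place on each $G_i$ the valid weight function $w_i$ of Lemma~\ref{l5}, extended by $0$ to all of $V(Q_4)$.

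The key computation is that $\sum_{i=1}^{4}w_i=w^*$. A non-root vertex $v$ of Hamming weight $k$ lies in exactly $k$ of the subgraphs $G_i$, namely those with $v_i=1$, and in each such $G_i$ its distance from $u=e_i$ inside the subcube equals $k-1$ (the number of coordinates, other than $i$, in which $v$ differs from $e_i$). The weights in Lemma~\ref{l5} depend only on this distance, taking the values $4,2,\tfrac{4}{3},1$ at distances $0,1,2,3$, so that $w_i(v)=4/k$ whenever $v_i=1$. Therefore $\sum_i w_i(v)=k\cdot(4/k)=4=w^*(v)$ for every $k\in\{1,2,3,4\}$, and the two functions agree on every non-root vertex.

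With the decomposition verified, validity is immediate: each $G_i$ is a connected subgraph of $Q_4$ rooted at $(0,0,0,0)$, each $w_i$ is valid by Lemma~\ref{l5}, and $w^*(v)=4>0$ for every non-root vertex, so the Conic Lemma applies and $w^*$ is valid. Combined with Lemma~\ref{l1} (here $m=4$ and $w^*(1_{Q_4})=15\cdot 4=60$) this yields $\pi(Q_4)\leq\lfloor 60/4\rfloor+1=16$, matching the lower bound $\pi(Q_4)\geq 2^4=16$ from Lemma~\ref{l2}.

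The only real obstacle is the bookkeeping of the vertex correspondence, and in particular checking that $(1,1,1,1)$ simultaneously serves as the weight-$1$ antipode $z$ in all four subgraphs. The reason the constant weight $4$ emerges so cleanly is precisely the identity $4=1\cdot 4=2\cdot 2=3\cdot\tfrac{4}{3}=4\cdot 1$ built into Lemma~\ref{l5}'s weights along each distance layer; the substantive work has already been carried out in proving that lemma, so no new case analysis is needed here.
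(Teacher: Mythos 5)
Your proposal is correct and follows essentially the same route as the paper: the paper also writes $w^*$ as the sum of four copies of the Lemma~\ref{l5} weight function placed on the induced subgraphs of the same sets $V_1,\dots,V_4$ and concludes by the Conic Lemma. Your explicit verification that a Hamming-weight-$k$ vertex receives $4/k$ from each of the $k$ subcubes containing it is a detail the paper leaves implicit, but the argument is identical.
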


\begin{figure}[H]
\centering
\bt[scale=0.83]
\centering
\foreach \x in {0,45,90,135,180,225,270,315}
{

	\draw(\x-90:3)--(\x+45:3)--(\x:7.24)--(\x-45:3);
	\draw(\x:7.24)--(\x-45:7.24);
}
\foreach \x in {0,45,135,180,225,270,315}
{
	\draw[fill](\x:7.24)circle(2pt);
	\draw[fill](\x:3)circle(2pt);

}
\draw[fill](90:7.24)circle(2pt);
\draw[fill](90:3)circle(2pt);

\draw (90:7.3)node[anchor=-110] {$(0,0,0,0):r$};
\draw (90:3.2)node[anchor=-110] {$(1,0,0,1)$};
\draw (89.5:-7.38)node[anchor=110] {$(1,1,1,1)$};
\draw (90:-3.2)node[anchor=110] {$(0,1,1,0)$};
\draw (135:7.38)node[anchor=-40] {$(1,0,0,0)$};
\draw (45:7.38)node[anchor=-140] {$(0,0,0,1)$};
\draw (-135:7.38)node[anchor=40] {$(1,1,1,0)$};
\draw (-45:7.38)node[anchor=140] {$(0,1,1,1)$};
\draw (180:7.3)node[anchor=-20] {$(1,1,0,0)$};
\draw (0:7.3)node[anchor=-160] {$(0,0,1,1)$};
\draw (45:3.3)node[anchor=-150] {$(0,0,1,0)$};
\draw (135:3.3)node[anchor=-30] {$(0,1,0,0)$};
\draw (-45:3.3)node[anchor=150] {$(1,0,1,1)$};
\draw (-135:3.3)node[anchor=30] {$(1,1,0,1)$};
\draw (0:3.3)node[anchor=-160] {$(0,1,0,1)$};
\draw (180:3.3)node[anchor=-20] {$(1,0,1,0)$};

\draw(-90:8.4)node{\large $Q_4$};
\et
    \caption{\label{f5}A useful drawing of $Q_4$}

\end{figure}

\begin{proof}
This proof follows from the same technique as Proposition~\ref{p1}. Notice that $w^*$ is the addition of $4$ copies of the weight function $w$ in Lemma~\ref{l5}. In particular, consider vertex sets 
\begin{align*}
V_1=\{(1,x_2,x_3,x_4)|x_2,x_3,x_4\in \{0,1\}\}\cup\{(0,0,0,0)\},\\
V_2=\{(x_1,1,x_3,x_4)|x_1,x_3,x_4\in \{0,1\}\}\cup\{(0,0,0,0)\},\\
V_3=\{(x_1,x_2,1,x_4)|x_1,x_2,x_4\in \{0,1\}\}\cup\{(0,0,0,0)\},\\
V_4=\{(x_1,x_2,x_3,1)|x_1,x_2,x_3\in \{0,1\}\}\cup\{(0,0,0,0)\}.
\end{align*}
Then, the $4$ induced subgraphs of $V_1,V_2,V_3$ and $V_4$, with $(0,0,0,0)$ being root on each of them, are the graphs that these $4$ copies of $w$ lie on. Thus by Lemma \ref{l4}, $w^*$ is valid.
\qed
\end{proof}
\begin{cj}\label{c1}
For any positive integer $n\geq 3$, let $G_n$ be the graph obtained by attaching the root $r$ to an arbitrary vertex of $Q_{n-1}$ through an extra pendent edge. Let $w_n$ be the weight function on $G_n$ where $w_n(r)=0$ and $w_n(v)=\frac{1}{d(v,r)}$ for all $v\neq r$. Then, $w_n$ is valid.
\end{cj}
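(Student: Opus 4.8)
The plan is to prove Conjecture~\ref{c1} by a nested induction modeled on the ladder Proposition~\ref{p1} $\to$ Lemma~\ref{l5} $\to$ Theorem~\ref{t2}, but executed for the $1/d$ family. Throughout, write $u=(0,\dots,0)$ for the vertex of $Q_{n-1}$ adjacent to $r$, so that $d(v,r)=|v|+1$ and $w_n(v)=1/(|v|+1)$, where $|v|$ denotes Hamming weight. Besides the target statement, call it (B$_n$), I would carry along a companion family (A$_m$): on $Q_m$ \emph{rooted at $u$}, the function $\tilde w_m(v)=1/(|v|+1)$ (with $\tilde w_m(u)=0$) is valid. The companion is forced on us because $w_n$ restricted to the cube, with $u$ taken as root, is exactly $\tilde w_{n-1}$. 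Conveniently, (A$_3$) is Proposition~\ref{p1} rescaled by $4$ and (B$_4$) is Lemma~\ref{l5} rescaled by $4$, so these, together with a direct check of the small graphs behind (A$_2$) and (B$_3$), furnish base cases.

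The two families determine each other. On one hand (A$_m$) follows from (B$_{m+1}$) by a virtual-pebble trick: from a $u$-unsolvable $p$ on $Q_m$ (so $p(u)=0$), place one extra pebble on $u$ to obtain $p'$ on $G_{m+1}$; a lone pebble on $u$ cannot move until a second pebble arrives at $u$, which the $u$-unsolvable cube part can never supply, so $p'$ is $r$-unsolvable, and (B$_{m+1}$) minus the added weight $w_{m+1}(u)=1$ gives precisely $\tilde w_m(p)\le \tilde w_m(1_{Q_m})$. On the other hand (B$_n$) follows from (A$_{n-1}$): for $r$-unsolvable $p$ we may assume $p(u)\le 1$, and if $p(u)=1$ then $\tilde w_{n-1}(p|_{Q_{n-1}})=w_n(p)-1$, so any surplus over $w_n(1_{G_n})$ forces $\tilde w_{n-1}(p|_{Q_{n-1}})>\tilde w_{n-1}(1_{Q_{n-1}})$ and hence a second pebble onto $u$ by (A$_{n-1}$). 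Since these reductions are mutually circular at a fixed level, I would anchor the induction on the (A) family alone---proving (A$_m$) by induction on $m$ with a Lemma~\ref{l5}-type analysis that splits on the number of pebbles at a fixed neighbor of $u$ and calls (A$_{m-1}$) on the residual subcube---and then read each (B$_n$) off from (A$_{n-1}$), leaving only the case $p(u)=0$ to be handled directly as in Lemma~\ref{l5}.

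The hard part is exactly that $p(u)=0$ regime, and I expect no shortcut around a genuine case analysis. The obvious collapse---splitting the cube into $Q^{0}\ni u$ and its parallel copy $Q^{1}$ under the coordinate flip $\phi$, and folding each pebble of $Q^{1}$ onto its partner by halving---is not weight-tight: it leaves a correction $\sum_{v\in Q^{0}}\big(p(\phi(v))/(|v|+2)-\lfloor p(\phi(v))/2\rfloor/(|v|+1)\big)$ that grows without bound as pebbles pile on a single vertex of $Q^{1}$, so it is not controlled by the relevant threshold gap without further argument. The difficulty is intrinsic: a vertex at distance $d\ge 3$ receives weight $1/d$, which exceeds the largest value any single tree strategy of Lemma~\ref{l3} can place there, so $w_n$ is provably \emph{not} a conic combination of tree strategies; and while $1/(|v|+1)=\int_0^1 t^{|v|}\,dt$, the integrand $t^{|v|}$ satisfies the halving condition only for $t\le 1/2$, so the elementary part certifies just $\int_0^{1/2}$ and the whole surplus from $t\in(1/2,1]$ must come from cube connectivity, precisely where $t^{|v|}$ ceases to be valid. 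Controlling the resulting proliferation of subcases---the joint distribution of pebbles over the $\binom{n-1}{k}$ vertices of each Hamming level, with weights tracked modulo small denominators---is the main obstacle, and is in substance the problem of building $2$-fold weight functions highlighted in the introduction.
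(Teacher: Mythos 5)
The statement you are addressing is Conjecture~\ref{c1}; the paper offers no proof of it and explicitly leaves it open (indeed, open question~2 of the paper speculates that new tools, such as weight functions certifying $2$-fold solvability, would be needed). So there is no proof in the paper to compare against, and the only question is whether your proposal closes the conjecture. It does not, and you say so yourself. What you have correctly established is an organizational framework: the identification $d(v,r)=|v|+1$; the fact that (A$_3$) and (B$_4$) are Proposition~\ref{p1} and Lemma~\ref{l5} rescaled by $4$ (both rescalings check out); the virtual-pebble reduction (B$_{m+1}$)$\Rightarrow$(A$_m$), which is sound because a lone pebble at $u$ cannot move and $\tilde w_m(u)=0$ makes the weight bookkeeping exact; and the partial converse (A$_{n-1}$)$\Rightarrow$(B$_n$) in the case $p(u)=1$ (with the small caveat that you should apply (A$_{n-1}$) to the configuration with the pebble at $u$ deleted, so that $u$-solvability is not vacuous; the weights are unchanged since $\tilde w_{n-1}(u)=0$). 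Your observation that $w_n$ cannot be a conic combination of tree strategies --- because every tree path from a distance-$d$ vertex to $r$ passes through the cut vertex $u$, forcing weight at most $2^{-(d-1)}<1/d$ there for $d\ge 3$ --- is also correct and worth recording.

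The genuine gap is the entire inductive step. Your two reductions show that the families (A$_m$) and (B$_{m+1}$) are equivalent at each level, but an equivalence between two unproved statements at the same level is not an induction: to get from level $m-1$ to level $m$ you must handle the $p(u)=0$ regime (equivalently, prove (A$_m$) from (A$_{m-1}$)), and for this you offer only the phrase ``a Lemma~\ref{l5}-type analysis that splits on the number of pebbles at a fixed neighbor of $u$ and calls (A$_{m-1}$) on the residual subcube.'' Lemma~\ref{l5}'s proof is a bespoke five-case analysis tailored to the eight vertices of $Q_3$ and the specific denominators $2$ and $4/3$; nothing in it is stated, or obviously statable, in a form that scales with $m$, and you give no candidate for the inductive invariant that would replace it. Your own diagnosis confirms the hole: the halving-collapse of $Q^1$ onto $Q^0$ loses weight $\approx p(\phi(v))\cdot|v|/\bigl(2(|v|+1)(|v|+2)\bigr)$ per vertex, which is unbounded, and the integral decomposition $1/(|v|+1)=\int_0^1 t^{|v|}\,dt$ only certifies the $t\le 1/2$ portion. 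In short, the proposal reduces the conjecture to itself (shifted by one index) plus exactly the hard case, and that hard case is where the conjecture lives. As a survey of why the conjecture is difficult this is accurate and useful; as a proof it does not get past the base cases already present in the paper.
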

Let's see an example of a potential application of the weight function defined in Conjecture \ref{c1}. Chung proved in \cite{Chung} that for each integer $n\geq 2$, $\pi(Q_n)=2^n$. The truth of our conjecture implies Chung's result. For any hypercube with dimension at least $3$, let the root $r$ be the vertex where each coordinate is $0$. For each integer $i$ from $1$ to $n$, let $w_{(n,i)}$ be a copy of $w_{n}$ on $H_i$, where $H_i\cong G_n$ is the subgraph of $Q_n$ induced by the vertex set $$V_i=\{(x_1,x_2,\ldots,x_n)|x_i=1\}\cup\{(0,0,\ldots,0)\}.$$ Define $w'_n$ as the weight function on $Q_n$ such that $$w'_n=\sum_{i=1}^{n}w_{(n,i)}.$$ For an arbitrary vertex $v\neq r$ in $Q_{n}$, it has $d(v,r)$ entries being $1$. Thus the number of $H_i$'s containing $v$ is $d(v,r)$. It follows that $$w'_n(v)=\frac{1}{d(v,r)}\cdot d(v,r)=1$$ for each $v\neq r$. By Lemma \ref{l4}, $w'_n$ is valid. By Lemma~\ref{l1}, $\pi(Q_n)\leq 2^n$. Also, the diameter of $Q_n$ is $n$, so $\pi(Q_n)\geq 2^n$ by Lemma~\ref{l2}. Hence $\pi(Q_n)=2^n$.\\

It is worth noting that because $\pi(Q_n)=2^n$ from Chung \cite{Chung}, the validity of the weight function $w'_n$, where $w'_n(v)=1$ for each $v\neq r$ and $w'_n(r)=0$, is a direct result. This is because for any $r$-unsolvable configuration $p$, $\pi(Q_n)=2^n>|p|=w'_n(p)$, where $|p|$ is the number of pebbles in $p$. Since $w'_n(p)$ is an integer, we know $w'_n(1_G)=2^n-1\geq w'_n(p)$ so $w'_n$ is valid. This property is true for any \textbf{Class 0} graph; that is, for any graph whose pebbling number equals its number of vertices.

\section{Weight Functions on Variations of Lollipop Graphs}
\textbf{Lollipop} graphs are formed by attaching the endpoint of a path to some vertex in a cycle. In the context of pebbling, typically the root is placed at the other endpoint of that path. The pebbling numbers and weight functions related to such graphs were first studied in \cite{cranston2015modified}. In this section, we construct a set of valid weight functions on variations of lollipop graphs.

\begin{theorem}\label{t3}
Fix an arbitrary positive integer $n$. The graph in Figure \ref{f6}, denoted by $L_n$, is formed by connecting a path of length $n$ to a collection of $2^{n+1}$ paths of length $2$ that all share the same starting and ending vertex. Let $w$ be the weight function on $L_n$, as shown in Figure~\ref{f6}, where $w$ is defined by $w(u_i)=1$ for each $i\in \{0,1,\ldots,2^{n+1}\}$ and $w(v_j)=2^j$ for each $j\in \{1,2,\ldots,n\}$, and $w(r)=0$. Then, $w$ is valid.
\end{theorem}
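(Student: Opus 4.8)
The plan is to give a direct argument that reduces the lollipop $L_n$ to the path $v_1 v_2 \cdots v_n r$ through the single cut vertex $v_1$, where $v_1$ is the endpoint of the bulb adjacent to the length-$n$ path, $u_0$ is the opposite endpoint of the bulb, and $u_1,\dots,u_{2^{n+1}}$ are the midpoints of the $2^{n+1}$ length-two paths. Throughout I would write $b=p(u_0)$, $A=\sum_{k=1}^{2^{n+1}}p(u_k)$ for the pebbles on the bulb, and $c_j=p(v_j)$. First I would record that
\[
w(1_{L_n})=\bigl(2^{n+1}+1\bigr)+\sum_{j=1}^{n}2^j=2^{n+2}-1,
\]
and that $r$ is the unique vertex of weight $0$, so by the definition of validity it suffices to show that every $r$-unsolvable configuration $p$ satisfies $w(p)\le 2^{n+2}-1$.

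The key structural step is a reduction lemma. Since each $u_k$ is adjacent only to $u_0$ and $v_1$, the bulb meets the path $v_1 v_2\cdots v_n r$ only at $v_1$; moves of the form $v_1\to u_k$ push pebbles away from $r$ and may be discarded, so any solving sequence can be serialized into a ``bulb phase'' that pushes pebbles up to $v_1$, followed by a ``path phase.'' Let $D$ denote the maximum number of pebbles deliverable to $v_1$ using only bulb pebbles. Applying Lemma~\ref{l2} to the path $v_1\cdots v_n r$ (whose assigned weights $2^1,\dots,2^n$ are exactly twice the Lemma~\ref{l2} weights, so the solvability threshold becomes $2^{n+1}$ and a pebble delivered to $v_1$ contributes weight $2$), I would prove
\[
p \text{ is } r\text{-solvable} \iff \sum_{j=1}^{n}2^j c_j + 2D \ge 2^{n+1}.
\]
Hence if $p$ is $r$-unsolvable then, as the left-hand side is an even integer, $\sum_{j}2^j c_j\le 2^{n+1}-2-2D$, and therefore $w(p)\le (b+A-2D)+2^{n+1}-2$.

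The crux is then a purely combinatorial bound on the bulb, namely $(b+A)-2D\le 2^{n+1}+1$, which I would establish by an explicit delivery strategy and a short loss estimate. First send $\lfloor p(u_k)/2\rfloor$ pebbles from each midpoint $u_k$ straight to $v_1$, leaving $L\le 2^{n+1}$ odd ``leftovers''; then spend $u_0$'s pebbles, first topping up leftover midpoints (each costs $2$ pebbles off $u_0$ and yields one extra pebble at $v_1$) and finally routing any surplus through a single conduit midpoint at cost $4$. A case split controls the loss: if $b<2L$ one fills $\lfloor b/2\rfloor$ leftovers and gets loss $\le L+1\le 2^{n+1}+1$; if $b\ge 2L$ one fills all leftovers and routes the rest, giving loss $\le b/2+3/2$, which is $\le 2^{n+1}+1$ because unsolvability forces $b\le 2^{n+2}-1$ (otherwise $u_0$ alone routes $\lfloor b/4\rfloor\ge 2^{n}$ pebbles to $v_1$, making $p$ solvable). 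Combining the two estimates yields $w(p)\le (2^{n+1}+1)+(2^{n+1}-2)=2^{n+2}-1=w(1_{L_n})$, which proves validity; the all-pebbles-on-$u_0$ configuration shows the bound is tight.

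I expect the main obstacle to be the reduction lemma: making rigorous that an arbitrary, possibly interleaved, solving sequence can be reordered into a bulb phase followed by a path phase across the cut vertex $v_1$, so that solvability genuinely depends only on $\sum_j 2^j c_j+2D$. The delivery-strategy estimate is routine but demands careful floor-function bookkeeping, and one must double-check the boundary configurations where $b$ is near $2^{n+2}$ and where many midpoints carry odd counts, since these are precisely the cases that make the final inequality tight.
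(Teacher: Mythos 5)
Your proposal is correct and follows essentially the same route as the paper: split $L_n$ into the path $P=(v_1,\dots,v_n,r)$ and the bulb $U$, apply Lemma~\ref{l2} to the doubled weights on $P$, and bound the bulb's delivery to $v_1$ by the same strategy (empty each midpoint in pairs, top up odd midpoints from $u_0$, route any surplus of $u_0$ through one conduit midpoint at cost $4$), with the same case split on whether $u_0$ has enough pebbles to fix all odd midpoints; your ``loss'' bound $b+A-2D\le 2^{n+1}+1$ is just a repackaging of the paper's count of pebbles delivered to $v_1$. The ``main obstacle'' you anticipate is illusory: your argument only invokes the easy sufficiency direction of the claimed equivalence (enough weight at $v_1$ implies solvable), so no serialization of an arbitrary solving sequence is ever needed.
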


\begin{figure}[H]
\centering
\begin{tikzpicture}[scale=1]
\begin{scope}[rotate=-90,yshift=-3.6cm]
\draw[fill] (-3,0)node[anchor=-90]{$r$}circle(2pt)--(-2,0)node[anchor=180]{$v_n$}circle(2pt)--(-1,0)node[anchor=180]{$v_{n-1}$}circle(2pt);
\draw (1,0)--(2.5,2.3)node[anchor=180]{$u_{2^{n+1}}$}--(4,0)--(2.5,0.6)node[anchor=180]{$u_{2^{n+1}-1}$}--(1,0)--(2.5,-0.6)node[anchor=0]{$u_2$}--(4,0)--(2.5,-2.3)node[anchor=0]{$u_1$}--(1,0);
\draw[fill](0,0)node[anchor=180]{$v_2$}circle(2pt)--(1,0)node[anchor=180]{$v_1$}circle(2pt);
\node[dot] at (2.53,0) {$\cdots$};
\node[dot2,font=\large] at (-0.5,0){$\cdots$};
\foreach \x in {1,4}
{
	\draw[fill] (\x,0)circle(2pt);
}
\foreach \x in {2.3,0.6,-0.6,-2.3}
{
	\draw[fill](2.5,\x)circle(2pt);
	
}
\draw[fill] (4,0)node[anchor=90] {$u_0$}circle(2pt);

\draw(4.8,0)node{\large $L_n$};
\end{scope}

\begin{scope}[rotate=-90,yshift=3.6cm]
\draw[fill] (-3,0)node[anchor=-90]{$r$}circle(2pt)--(-2,0)node[anchor=180]{$2^n$}circle(2pt)--(-1,0)node[anchor=180]{$2^{n-1}$}circle(2pt);
\draw (1,0)--(2.5,2.3)node[anchor=180]{$1$}--(4,0)--(2.5,0.6)node[anchor=180]{$1$}--(1,0)--(2.5,-0.6)node[anchor=0]{$1$}--(4,0)--(2.5,-2.3)node[anchor=0]{$1$}--(1,0);
\draw[fill](0,0)node[anchor=180]{$2^2$}circle(2pt)--(1,0)node[anchor=196]{$2$}circle(2pt);
\node[dot] at (2.53,0) {$\cdots$};
\node[dot2,font=\large] at (-0.5,0){$\cdots$};
\foreach \x in {1,4}
{
	\draw[fill] (\x,0)circle(2pt);
}
\foreach \x in {2.3,0.6,-0.6,-2.3}
{
	\draw[fill](2.5,\x)circle(2pt);
	
}

\draw[fill] (4,0)node[anchor=90] {$1$}circle(2pt);

\draw(4.8,0)node{\large $w$};
\end{scope}
\end{tikzpicture}
    \caption{\label{f6}A valid weight function on $L_n$, denoted by $w$}

\end{figure}

\begin{proof}
Let $P$ be the path $(v_1,v_2,\ldots,v_n,r)$ and $U$ be the vertex set $\{u_0,u_1,u_2,\ldots,u_{2^{n+1}}\}$. Let $p$ be an arbitrary $r$-unsolvable configuration on $L_n$. Note that for each $v_i\in P$ with $i\neq n$, we have $w(v_{i+1})=2w(v_i)$. Let $p$ be an arbitrary configuration on $L_n$ with $$w(p)\geq w(1_{L_n})+1=2^{n+2}.$$ To prove $w$ is valid, it suffices to show $p$ is $r$-solvable. For purposes of contradiction suppose that $p$ is not $r$-solvable. By definitions in Section $1.1$, we have the following:
\begin{align*}
w(p)&=\sum_{v\in V(L_n)}p(v)\cdot w(v)\\
w_P(p)&=\sum_{v\in V(P)}p(v)\cdot w(v)\\
p(U)&=\sum_{v\in U}p(v).
\end{align*}
Since $w(v_1)=2$, applying Lemma \ref{l2} on $P$ shows that if $w_P(p)\geq 2^{n+1}$ then $p$ is $r$-solvable. Suppose $w_P(p)<2^{n+1}$. For simplicity of notation, let $w_P(p)=x$ and $p(u_0)=2b+y$ where $b\geq 0$ and $y$ equals $0$ or $1$. Let the number of vertices in $U-\{u_0\}$ with an odd number of pebbles be $a$. Clearly $x$ is even and $a\leq 2^{n+1}$. Since $w(u)=1$ for each $u\in U$, we know $$p(U)=w(U)={w(p)-w_P(p)\over 1}\geq 2^{n+2}-x.$$ Lemma \ref{l2} shows that it suffices to prove we can move at least $\frac{2^{n+1}-x}{2}$ pebbles from $U$ to $v_1$.\\

\textbf{Case 1: }Suppose that $a\leq b$.

We will make the following pebbling moves in sequence:
\begin{itemize}
\item Move $1$ pebble from $u_0$ to each vertex in $U-\{u_0\}$ that has an odd number of pebbles. Now each vertex in $U-\{u_0\}$ has an even number of pebbles.
\item For each vertex in $U-\{u_0\}$, move pebbles from it to $v_1$ until there are no pebbles on it.
\item If $u_0$ still has at least $4$ pebbles, move two pebbles from $u_0$ to $u_1$ and then move $1$ pebble from $u_1$ to $v_1$. Repeat this step until $u_0$ has fewer than $4$ pebbles.
\end{itemize}
In the pebbling moves described above, there are three ways for us to add a pebble onto $v_1$: use $2$ pebbles from $u_0$ and $1$ pebble from some vertex in $U-\{u_0\}$, use $2$ pebbles from some vertex in $U-\{u_0\}$, or use $4$ pebbles from $u_0$. Hence we conclude that adding $1$ pebble onto $v_1$ uses at most $4$ pebbles.

Observe that after these pebbling moves, each vertex in $U-\{u_0\}$ has no pebbles and $u_0$ has at most $3$ pebbles. Thus at least $p(U)-3$ pebbles are consumed in order to add pebbles onto $v_1$. Hence, the number of pebbles we added to $v_1$ through these pebbling moves is at least$$\left \lceil {p(U)-3\over 4}\right \rceil\geq \left\lceil {2^{n+2}-x-3\over 4}\right\rceil\geq {2^{n+1}-x\over 2}$$where the last inequality holds because $x$ is even.\\

\textbf{Case 2: }Suppose that $a>b$.

We will make the following pebbling moves in sequence:
\begin{itemize}
\item Pick $b$ vertices in $U-\{u_0\}$ that have an odd number of pebbles. Move $1$ pebble from $u_0$ to each of them.
\item For each vertex in $U-\{u_0\}$, move pebbles from it to $v_1$ until it has at most $1$ pebble.
\end{itemize}
Similar to Case 1, the number of pebbles we add to $v_1$ through these pebbling moves is exactly $${p(U)-a-y\over 2}\geq \left\lceil {2^{n+2}-a-x-y\over 2}\right\rceil={2^{n+1}-x\over 2}+\left\lceil{(2^{n+1}-a-y)\over 2}\right\rceil\geq {2^{n+1}-x\over 2},$$where the last inequality holds because $a+y\leq 2^{n+1}+1$.\\

Hence in all cases we can move at least ${2^{n+1}-x\over 2}$ pebbles from $U$ to $v_1$ and we conclude that $w$ is valid.

\qed
\end{proof}

In fact, this weight function can be further generalized to contain $m$ vertices of the form $u_i$, where $m=|U|\geq 2^{n+1}+1$. Here is the brief idea for proving its validity. Denote this graph as $G$ and let $p$ be an arbitrary configuration on $G$ such that $w(p)>w(1_G)$. Then, we first move pebbles from each vertex in $U-\{u_0\}$ to $v_1$ until at most $1$ pebble is left on it. This gives us a new configuration $p'$. Clearly $w(p)=w(p')$. Then, ignore $m-2^{n+1}$ arbitrary vertices in $U-\{u_0\}$ and let the subgraph induced by the rest of the vertices be denoted by $G'$. Notice that $w_{G'}(p')\geq w(p)-1\cdot (m-2^{n+1})>w(1_{G'})$. By Theorem \ref{t3}, configuration $p'$ on $G'$ is $r$-solvable, which implies that configuration $p$ on $G$ is also $r$-solvable.

\section{Open questions}


\begin{enumerate}
    \item Can more efficient proof techniques be developed to prove the validity of weight functions?
    \item Is it possible to modify weight functions to characterize other variations of pebbling problems? For example, how could techniques like weight functions be used to characterize configurations that could place at least $2$ pebbles on the root vertex $r$? The authors believe that such modifications could help prove Conjecture \ref{c1}.
    \item In this paper we reviewed weight functions for odd cycles. Generalized lollipop graphs with odd cycles might involve a path connected to an odd cycle or a path connected to a set of paths whose lengths might differ by one that share the same starting and ending vertex. Can we find valid weight functions for generalized lollipop graphs with odd cycles analogous to Lemma 3 in \cite{cranston2015modified}? Specifically, could the paths at the end of the lollipop have length larger than $2$? What if there were multiple collections of short paths along the long path instead of just one at the end? Note that weight functions on such graphs are used to prove Theorem 7 in \cite{cranston2015modified}.
    \item Recall that a \textit{Lemke graph} is a graph that does not satisfy the 2-pebbling property.  One possible path to a counterexample of Graham's conjecture is to investigate the Cartesian product of two copies of a Lemke graph.  Specifically, there have been work \cite{flocco,hurlbert2017,kenter} on bounding the pebbling number of the original 8-vertex graph of Lemke (see \cite{sne}).  Let this graph be denoted $L$. Can we use these new weight functions to improve the bound of the pebbling number of $L\mathbin{\square} L$  \cite{cranston2015modified}?
\end{enumerate}

Note that our results have more implications than simply constructing other valid weight functions or computing the pebbling numbers of a few graphs. We believe that they can be coupled with computational tools (extending work of Flocco et al. \cite{flocco}) to improve computational techniques for computing pebbling numbers of any graph that has hypercubes or lollipops as induced subgraphs.

\section*{Data Availibility}

The manuscript has no associated data.

\section*{Conflict of Interest}

The authors declare that they have no conflict of interest.

\section*{Acknowledgement}
The authors would like to thank an anonymous referee for their careful reading of the manuscript and helpful comments.

\textbf{Funding} This work was partially supported by the Davidson Research Initiative from Davidson College.

\bibliographystyle{abbrv}      
\bibliography{lollipop}

\end{document}